\documentclass[11pt]{article}

\usepackage{amsfonts}
\usepackage{amsmath}
\usepackage{amssymb}
\usepackage{epsfig}
\usepackage{enumerate}
\usepackage{citesort}
\numberwithin{equation}{section} 



\topmargin -.5in
\oddsidemargin 0pt
\textheight 8.8in
\textwidth 6.5in
\title{High-order Nystr\"om discretizations for the solution of integral equation formulations of two-dimensional Helmholtz transmission problems}
\author{Yassine Boubendir, V\'{\i}ctor Dom\'{\i}nguez, 
Catalin Turc\\ \small 
New Jersey Institute of Technology, Universidad P\'ublica de Navarra, Spain,\\ \small
New Jersey Institute of Technology\\ \small
boubendi@njit.edu,\ victor.dominguez@unavarra.es,\ 
catalin.c.turc@njit.edu}

\newtheorem{theorem}{Theorem}[section]
\newtheorem{lemma}[theorem]{Lemma}

\newtheorem{remark}[theorem]{Remark}
\newenvironment{proof}{\hspace{0.5cm} {\bf Proof.}}
{$\quad {}_\blacksquare$\vspace{0.3cm}}

\setcounter{footnote}{0}
\date{}
\newcommand{\triple}[1]{{\left\vert\kern-0.25ex\left\vert\kern-0.25ex\left\vert #1 
    \right\vert\kern-0.25ex\right\vert\kern-0.25ex\right\vert}}

\bibliographystyle{plain}
\begin{document}
\maketitle
\begin{abstract}
  We present and analyze fully discrete Nystr\"om methods for the solution of three classes of well conditioned boundary integral equations for the solution of
 two dimensional scattering problems by homogeneous dielectric scatterers. Specifically, we perform the stability analysis of Nystr\"om discretizations of (1) the classical second kind integral equations for transmission problems~\cite{KressRoach}, (2) the {\em single} integral equation formulations~\cite{KleinmanMartin}, and (3) recently introduced Generalized Combined Source Integral Equations~\cite{turc2}. The Nystr\"om method that we use for the discretization of the various integral equations under consideration are based on global trigonometric approximations, splitting of the kernels of integral operators into singular and smooth components, and explicit quadratures of products of singular parts (logarithms) and trigonometric polynomials. The discretization of the integral equations (2) and (3) above requires special care as these formulations feature compositions of boundary integral operators that are pseudodifferential operators of positive and negative orders respectively. We deal with these compositions through Calder\'on's calculus and we establish the convergence of fully discrete Nystr\"om methods in appropriate Sobolev spaces which implies pointwise convergence of the discrete solutions. In the case of analytic boundaries, we establish superalgebraic convergence of the method.  \newline \indent
  \textbf{Keywords}: transmission problems, 
  integral equations, pseudodifferential operators, regularizing
  operators, Nystr\"om method, trigonometric interpolation.\\
   
 \textbf{AMS subject classifications}: 
 65N38, 35J05, 65T40, 65F08
\end{abstract}

\section{Introduction\label{intro}}

\parskip 2pt plus2pt minus1pt

Numerical methods based on integral equation formulations for the solution of 
Helmholtz transmission problems, when applicable, have certain advantages over those that 
use volumetric formulations, largely owing to  the dimensional reduction, the 
explicit enforcement of the radiation conditions, and the absence of dispersion errors. Constructing integral equation formulations that are equivalent with the system of PDEs that models transmission scattering problems is by now well understood. Indeed, a wide variety of well-posed boundary integral equations for the solution of Helmholtz transmission problems has been proposed in the literature, at least in the case when the interfaces of material discontinuity are regular enough. Most of these formulations are derived from representations of the fields in each region filled by a homogeneous material by suitable combinations of single and double layer potentials. The enforcement of the continuity of the fields and their normal derivatives across interfaces of material discontinuity leads to Combined Field Integral Equations (CFIE) of transmission scattering problems. Some of these integral formulations involve two unknowns per each interface of material discontinuity~\cite{costabel-stephan,KittapaKleinman,KressRoach,LaRaSa:2009,rokhlin-dielectric}, while others involve one unknown per each interface of material discontinuity~\cite{KleinmanMartin}.  

Motivated by the quest to design integral equation formulations that have better spectral properties than those of the classical CFIE formulations, a new methodology that uses representations of fields in terms of suitable combinations of single and double layer potentials that act on certain regularizing operators has been proposed in the literature in the past ten years~\cite{AntoineX,Antoine,br-turc,turc1,Levadoux,turc1,turc2}. Typically the regularizing operators can be constructed using coercive approximations of Dirichlet-to-Neumann operators, see~\cite{turc3} for a in-depth discussion on this methodology for the case of Helmholtz transmission problems. The ensuing integral equations which are referred to as Generalized Combined Source Integral Equations (GCSIE) or Regularized Combined Field Integral Equations (CFIER) lead upon discretization to solvers that deliver important computational gains over solvers based on the classical CFIE, see for instance Section~\ref{prec}. While the design of GCSIE is quite well understood and can be carried out in a well defined program~\cite{turc3}, the stability/error analysis of numerical schemes based on the GCSIE formulations has not been pursued to a great extent in the literature. One difficulty that arises in the error analysis of numerical schemes based on GCSIE formulations is related to the fact that the latter formulations feature compositions of boundary integral operators that are pseudodifferential operators of positive and negative orders, which must be handled with care in order to lead to stable discretizations.

We present a Nystr\"om method for the discretization of our two dimensional GCSIE operators that follows the discretization method introduced in~\cite{KressH}. Under the assumption that the interface of material discontinuity is a regular enough closed curve, this algorithm is based on global trigonometric approximations, splitting of the kernels of integral operators into singular and smooth components, and explicit quadratures of products of singular parts (logarithms) and trigonometric polynomials~\cite{kusmaul,martensen}. Other numerical approximations which can be found in the scientific
literature consider Petrov-Galerkin schemes with piecewise polynomial
functions, like periodic splines which, in some way, can include trigonometric 
polynomial as a limit case
as the degree of the splines tends to infinity  cf. \cite{LaRaSa:2009,RaSa:2006a}.  In the same
frame, Dirac
deltas can be understood as splines of degree $-1$, which gives rise to quadrature 
methods cf. \cite{DoRaSa:2008}.

The main ingredients in the error analysis proof are the mapping properties of the boundary integral operators that enter the boundary integral formulations of Helmholtz transmission problems and Sobolev spaces bounds of the error in trigonometric interpolation. Helmholtz transmission integral equations require the use of all four boundary layer operators associated to the Helmholtz equation. We show how these operators can be fully discretized and used in all the formulations considered in this paper, some of which include compositions of some of these operators. We note that the discretization operator compositions can be handled with ease by collocation discretizations, as it simply amounts to matrix multiplications. The same objective were pursed in the recent papers 
\cite{DoLuSa:2014a,DoLuSa:2014b} where simpler yet still moderate order (2 and 3 respectively) discretizations of the 
layer operators were presented. These discretizations rely on geometric quantities only, and do not make use of any splitting of the kernels of the four boundary integral operators related to the Helmholtz equation.

As a consequence, we establish the convergence of the fully discrete GCSIE method in appropriate Sobolev spaces which implies pointwise convergence of the discrete solutions. In the case of analytic boundaries, we establish superalgebraic convergence of the Nystr\"om method. The same techniques outlined above allow us to carry the stability analysis of the Nystr\"om discretization of the {\em single} integral equations introduced in~\cite{KleinmanMartin}, which to the best of our knowledge did not exist in the literature thus far.  Given that solvers based on GCSIE and {\em single} formulations can lead to one order of magnitude faster numerics than those based on classical CFIE formulations~\cite{costabel-stephan,KittapaKleinman,KressRoach,rokhlin-dielectric} (see the numerical results in Section~\ref{prec}), the comprehensive error analysis we undergo in this paper can only strengthen the claim that the former formulations should be the formulations of choice when solving transmission scattering problems.

The paper is organized as follows: in Section~\ref{cfie} we review four boundary integral equation formulations for the solution of transmission scattering problems; in Section~\ref{singular_int} we present a 
Nystr\"om discretization of the boundary integral equations considered in Section~\ref{cfie} and we establish the high-order of convergence of our solvers; in Section~\ref{prec} we present a comparison of the properties of Nystr\"om integral solvers based on the various formulations discussed in this paper.  

\section{Integral Equations of Helmholtz transmission problems\label{cfie}}

We consider the problem of evaluating the time-harmonic fields $u^1$ and $u^2$ that result as an incident field $u^{inc}$ impinges upon the boundary
$\Gamma$ of a homogeneous dielectric scatterer $D_2$ which occupies a bounded region in $\mathbb{R}^2$. We assume that both media occupying $D_2$ and its exterior are nonmagnetic, and the electric permitivity of the dielectric material inside the domain $D_2$ is denoted by $\epsilon_2$ while that of the medium occupying the exterior of $D_2$ is denoted by $\epsilon_1$. The frequency domain dielectric transmission problem is formulated in terms of finding fields $u^1$ and $u^2$ that are solutions to the Helmholtz equations
\begin{equation} 
  \label{eq:Ac_i}
\begin{aligned}
  \Delta u^2+k_2^2 u^2&=& 0, \qquad &\mathrm{in}\ D_2,\\
  \Delta u^1+k_1^2 u^1&=&0,\qquad &\mathrm{in}\ D_1=\mathbb{R}^2\setminus {\overline{D_2}},
\end{aligned}
\end{equation}
given an incident field $u^{inc}$ that satisfies
\begin{equation}
  \label{eq:Maxwell_inc}
  \Delta u^{inc}+k_1^2 u^{inc}=0 \qquad \mathrm{in}\ D_1,
\end{equation}
where the wavenumbers $k_i,i=1,2$ are defined as $k_i=\omega\sqrt{\epsilon_i}, i=1,2$ in terms of the frequency $\omega$. In addition, the fields $u^{1}$, $u^{inc}$, and $u^2$ are related on the boundary $\Gamma$ by the the following boundary conditions
\begin{eqnarray}
\label{eq:bc}
\gamma_D^1 u^1 + \gamma_D^1 u^{inc} &=&\gamma_D^2 u^2\qquad \rm{on}\ \Gamma \nonumber\\
\gamma_N^1 u^1 + \gamma_N^1 u^{inc}&=&\nu \gamma_N^2u^2\qquad \rm{on}\ \Gamma.
\end{eqnarray}
In equations~\eqref{eq:bc} and what follows $\gamma_D^i,i=1,2$ denote exterior and respectively interior Dirichlet traces, whereas $\gamma_N^i,i=1,2$ denote exterior and respectively interior Neumann traces taken with respect to the exterior unit normal on $\Gamma$. We assume in what follows that the boundary $\Gamma$ is a closed and smooth curve in $\mathbb{R}^2$. Depending on the type of scattering problem, the transmission coefficient $\nu$ in equations~\eqref{eq:bc} can be either $1$ (E-polarized) or $\epsilon_1/\epsilon_2$ (H-polarized). We furthermore require that $u^1$ satisfies Sommerfeld radiation conditions at infinity:
\begin{equation}\label{eq:radiation}
\lim_{|r|\to\infty}r^{1/2}(\partial u^1/\partial r - ik_1u^1)=0.
\end{equation}
We assume in what follows that the wavenumbers $k_i,i=1,2$ are real. Under this assumption, it is well known that the systems of partial differential equations~\eqref{eq:Ac_i}-\eqref{eq:Maxwell_inc} together with the boundary conditions~\eqref{eq:bc} and the radiation condition~\eqref{eq:radiation} has a unique solution~\cite{KressRoach,KleinmanMartin}. Moreover, the adjoint problem obtained by interchanging the interior and exterior domains has a unique solution~\cite{KressRoach}. The results in this text can be extended to the case of complex wavenumbers $k_i,i=1,2$, provided we assume uniqueness of the transmission problem and its adjoint.

\subsection{Boundary integral operators associated with the Helmholtz equations and second kind boundary integral formulations of Helmholtz transmission problems\label{di_ind_cfie}}
A variety of well-posed integral equations for the transmission problem~\eqref{eq:Ac_i}-\eqref{eq:bc} exist~\cite{KressRoach,costabel-stephan,KleinmanMartin,turc2}. On one hand, integral equations formulations for transmission problems can be formulated as a $2\times 2$ system of integral equations which can be derived from either (a) Green's formulas in both domains $D_1$ and $D_2$, in which case they are referred to as {\em direct} integral equation formulations~\cite{costabel-stephan,KleinmanMartin}, (b) from representations of the fields $u^j,j=1,2$ in forms of suitable combinations of single and double layer potentials in both domains $D_1$ and $D_2$, in which case they are referred to as {\em indirect} integral equation formulations~\cite{KressRoach}, (c) from Green's formulas and suitable approximations to exterior and interior Dirichlet-to-Neumann operators, in which case they are referred to as {\em regularized} combined field integral equations or generalized combined source integral equations~\cite{turc2}. On the other hand, integral equations formulations for transmission problems can be formulated as {\em single} integral equations which can be derived from (d) Green's formulas in one of the domains and (indirect) combined field representations in the other domain~\cite{KleinmanMartin}. The strategies recounted above lead to Fredholm second kind boundary integral equations for the solution of transmission problems~\cite{KressRoach,KleinmanMartin,turc2}, at least in the case when the curve $\Gamma$ is smooth enough ($C^3$ suffices). In order to present the aforementioned integral formulations, we review first the definition and mapping properties of the various scattering boundary integral operators. 

We start with the definition of the single and double layer potentials. Given a wavenumber $k$ such that $\Re{k}>0$ and $\Im{k}\geq 0$, and a density $\varphi$ defined on $\Gamma$, we define the single layer potential as
$$[SL_k(\varphi)](\mathbf{z}):=\int_\Gamma G_k(\mathbf{z}-\mathbf{y})\varphi(\mathbf{y})ds(\mathbf{y}),\ \mathbf{z}\in\mathbb{R}^2\setminus\Gamma$$
and the double layer potential as
$$[DL_k(\varphi)](\mathbf{z}):=\int_\Gamma \frac{\partial G_k(\mathbf{z}-\mathbf{y})}{\partial\mathbf{n}(\mathbf{y})}\varphi(\mathbf{y})ds(\mathbf{y}),\ \mathbf{z}\in\mathbb{R}^2\setminus\Gamma$$
where $G_k(\mathbf{x})=\frac{i}{4}H_0^{(1)}(k|\mathbf{x}|)$ represents the two-dimensional Green's function of the Helmholtz equation with wavenumber $k$. The Dirichlet and Neumann exterior and interior traces on $\Gamma$ of the single and double layer potentials corresponding to the wavenumber $k$ and a density $\varphi$ are given by
\begin{eqnarray}\label{traces}
\gamma_D^1 SL_k(\varphi)&=&\gamma_D^2 SL_k(\varphi)=S_k\varphi \nonumber\\
\gamma_N^j SL_k(\varphi)&=&(-1)^j\frac{\varphi}{2}+K_k^\top \varphi\quad j=1,2\nonumber\\
\gamma_D^j DL_k(\varphi)&=&(-1)^{j+1}\frac{\varphi}{2}+K_k\varphi\quad j=1,2\nonumber\\
\gamma_N^1 DL_k(\varphi)&=&\gamma_N^2 DL_k(\varphi)=N_k\varphi.
\end{eqnarray}
In equations~\eqref{traces} the operators $K_k$ and $K^\top_k$, usually
referred to as double and adjoint double layer operators, are defined for a given wavenumber $k$ and density $\varphi$ as
\begin{equation}
\label{eq:double}
(K_k\varphi)(\mathbf x):=\int_{\Gamma}\frac{\partial G_k(\mathbf x-\mathbf y)}{\partial\mathbf{n}(\mathbf y)}\varphi(\mathbf y)ds(\mathbf y),\ \mathbf x\ {\rm on}\ \Gamma
\end{equation}
and 
\begin{equation}
\label{eq:adj_double}
(K_k^\top\varphi)(\mathbf x):=\int_{\Gamma}\frac{\partial G_k(\mathbf x-\mathbf y)}{\partial\mathbf{n}(\mathbf x)}\varphi(\mathbf y)ds(\mathbf y),\ \mathbf x\ {\rm on}\ \Gamma.
\end{equation}
Furthermore, for a given wavenumber $k$ and density $\varphi$, 
the operator $N_k$ denotes the Neumann trace of the double layer potential on 
$\Gamma$ given in terms of a Hadamard Finite Part (FP)  integral which can be re-expressed in terms of a Cauchy Principal Value (PV) integral that involves the tangential derivative $\partial_s$ on the curve $\Gamma$
\begin{eqnarray}
 \label{eq:normal_double}
(N_k \varphi)(\mathbf x) &:=& \text{FP} \int_\Gamma \frac{\partial^{2}G_k(\mathbf x -\mathbf y)}{\partial \mathbf{n}(\mathbf x) \partial \mathbf{n}(\mathbf y)} \varphi(\mathbf y)ds(\mathbf y) 
\nonumber\\
&=&k^{2}\int_\Gamma G_k(\mathbf x -\mathbf y)
(\mathbf{n}(\mathbf x)\cdot\mathbf{n}(\mathbf y))\varphi(\mathbf y)ds(\mathbf y)+ {\rm PV}
\int_\Gamma \partial_s G_k(\mathbf x -\mathbf y)\partial_s \varphi(\mathbf y)ds(\mathbf y).\nonumber 
\end{eqnarray}
Finally, the single layer operator $S_k$ is defined for a wavenumber $k$ as
\begin{equation}\label{eq:sl}
(S_k\varphi)(\mathbf x):=\int_\Gamma G_k(\mathbf x -\mathbf y)\varphi(\mathbf y)ds(\mathbf y),\ \mathbf{x}\ {\rm on} \ \Gamma
\end{equation} 
for a density function $\varphi$ defined on $\Gamma$. Having recalled the definition of the scattering boundary integral operators, we present next their mapping properties in appropriate Sobolev spaces of functions defined on the curve $\Gamma$.

In what follows we recall the definition of Sobolev spaces $H^p(\Gamma)$ according 
to~\cite{Kress,Saranen}, as we make frequent use of these spaces. Our presentation 
follows very closely that in~\cite[Ch. 8]{Kress}. We then define, for $p\ge 0$, the $2\pi$ periodic Sobolev space of order $p$ 
\[
 H^p[0,2\pi]:=\{\varphi\in L^2[0,2\pi]\ |\ \|\varphi\|_p<\infty\}
\]
where
\begin{equation}
\label{eq:sobolev}
\| \varphi\|_p^2:=\sum_{m=-\infty}^\infty 
(1+|m|^{2})^p|\widehat{\varphi}(m)|^2,\quad \text{with}\quad 
 \widehat{\varphi}(m):=\frac{1}{2\pi}\int_0^{2\pi}\varphi(t) e^{-im t}\,dt. 
\end{equation}
is the (periodic) Sobolev norm. 
Clearly, $H^p[0,2\pi]$ equipped with  the natural inner product is a Hilbert space, with $H^0[0,2\pi]=L^2[0,2\pi]
$. For $p<0$, the same construction can be easily adapted to define $H^p[0,2\pi]$, once the Fourier coefficients
are understood in a weak sense. Equivalently, one can introduce  $H^{p}[0,2\pi]$, for $p<0$ 
as the dual space of $H^{-p}[0,2\pi]$, that is the space of bounded linear functionals on $H^{-p}[0,2\pi]$. If the curve $\Gamma$ is represented by a smooth (infinitely differentiable) 
$2\pi$ periodic parametrization $\Gamma=\{{\bf x}(t): t\in[0,2\pi)\}$, then 
the space $H^p(\Gamma)$ is defined as the space of functions 
$\varphi\in L^2(\Gamma)$ such that $\varphi\circ {\bf x}\in H^p[0,2\pi]$. 
It is a classical result, see for instance~\cite{Kress} that this definition of 
$H^p(\Gamma)$ is invariant with respect to the parametrization.

Having reviewed the definition of Sobolev spaces $H^{s}(\Gamma)$, 
we recall in the next result the mapping properties of the boundary integral operators defined above~\cite{turc2}:
\begin{theorem}\label{regL} 
For smooth and closed curves $\Gamma$ and all $s\in\mathbb{R}$ it holds
\begin{itemize}
\item $S_k:H^{s}(\Gamma)\to H^{s+1}(\Gamma)$
\item $N_k:H^{s}(\Gamma)\to H^{s-1}(\Gamma)$
\item $K_k^\top:H^{s}(\Gamma)\to H^{s+3}(\Gamma)$
\item $K_k:H^{s}(\Gamma)\to H^{s+3}(\Gamma)$.
\end{itemize}
In addition, for $\kappa\ne \kappa_2$, the operator $S_{\kappa}-S_{\kappa_2}$ is 
regularizing of three orders, that is 
$S_{\kappa}-S_{\kappa_2}:H^{s}(\Gamma)\to H^{s+3}(\Gamma)$ and 
$N_{\kappa}-N_{\kappa_2}$ is regularizing of one order, that is 
$N_{\kappa}-N_{\kappa_2}:H^{s}(\Gamma)\to H^{s+1}(\Gamma)$.
\end{theorem}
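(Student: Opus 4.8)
The plan is to pass to the parametrized setting and analyze every operator through the splitting of its kernel into a smooth biperiodic part and an explicit logarithmic part, exactly as in the Nystr\"om framework of \cite[Ch. 8]{Kress}. Fixing a smooth $2\pi$-periodic parametrization $\Gamma=\{\mathbf x(t):t\in[0,2\pi)\}$, each operator becomes an integral operator on $[0,2\pi]$ whose kernel I would bring to the canonical form
\begin{equation*}
K(t,\tau)=a(t,\tau)\,\log\!\left(4\sin^2\tfrac{t-\tau}{2}\right)+b(t,\tau),
\end{equation*}
with $a,b$ smooth and $2\pi$-biperiodic. The workhorse is then the standard mapping lemma: multiplication by a smooth function is bounded on every $H^s[0,2\pi]$; an operator with smooth biperiodic kernel is smoothing of infinite order; and the model operator $\varphi\mapsto\int_0^{2\pi}\log(4\sin^2\tfrac{t-\tau}{2})\varphi(\tau)\,d\tau$ is smoothing by exactly one order (its Fourier multiplier decays like $|m|^{-1}$). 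The refinement I would add is that if the logarithmic coefficient $a$ carries a factor $\sin^2\tfrac{t-\tau}{2}$, i.e. vanishes to second order on the diagonal, the associated operator gains two further orders of smoothing (the corresponding multiplier is a second difference of $|m|^{-1}$, hence $O(|m|^{-3})$). With this in place every claim reduces to reading off the diagonal vanishing order of $a$.

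Given this reduction, the single layer $S_k$ is immediate: the expansion $H_0^{(1)}(z)=1+\tfrac{2i}{\pi}\log(z/2)+O(z^2\log z)$ shows that its kernel has $a$ equal to the nonvanishing constant $-\tfrac1{2\pi}$ on the diagonal, giving $S_k:H^s\to H^{s+1}$. For $N_k$ I would avoid the hypersingular kernel directly and use the regularized representation recalled above, namely $N_k=k^2 S_k^{\mathbf n}+\partial_s S_k\partial_s$, where $S_k^{\mathbf n}$ is the single layer with the smooth weight $\mathbf n(\mathbf x)\cdot\mathbf n(\mathbf y)$; since $\partial_s$ is order $+1$ and $S_k,S_k^{\mathbf n}$ are order $-1$, the composition $\partial_s S_k\partial_s$ is order $+1$ and dominates, yielding $N_k:H^s\to H^{s-1}$. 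The substantive cases are $K_k$ and $K_k^\top$: differentiating $G_k$ produces the kernel $-G_k'(r)\,(\mathbf x-\mathbf y)\cdot\mathbf n(\mathbf y)/r$ with $r=|\mathbf x-\mathbf y|$, and the geometric identity for smooth curves, $(\mathbf x(t)-\mathbf x(\tau))\cdot\mathbf n(\tau)=O(|t-\tau|^2)$, shows the factor $(\mathbf x-\mathbf y)\cdot\mathbf n(\mathbf y)/r^2$ is smooth. Inserting $-r\,G_k'(r)=\tfrac1{2\pi}-\tfrac{k^2}{4\pi}r^2\log r+(\text{smooth in }r^2)$ and $r^2=4\sin^2\tfrac{t-\tau}{2}\,q(t,\tau)$ with $q$ smooth and positive, one sees that the logarithmic coefficient of $K_k$ (and likewise $K_k^\top$) carries a factor $\sin^2\tfrac{t-\tau}{2}$, hence the operator gains three orders of smoothing: $K_k,K_k^\top:H^s\to H^{s+3}$.

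The two difference estimates follow the same template. For $S_\kappa-S_{\kappa_2}$ the leading terms $-\tfrac1{2\pi}\log r$ are wavenumber independent and cancel, leaving a logarithmic coefficient $\propto(\kappa^2-\kappa_2^2)\,r^2$; the second-order diagonal vanishing again yields three orders of smoothing, so $S_\kappa-S_{\kappa_2}:H^s\to H^{s+3}$. For $N_\kappa-N_{\kappa_2}$ I would reuse the regularized representation: the term $\partial_s(S_\kappa-S_{\kappa_2})\partial_s$ composes an order $+1$ operator with the order $-3$ difference just established and is therefore order $-1$, while the remaining piece $\kappa^2(S_\kappa^{\mathbf n}-S_{\kappa_2}^{\mathbf n})+(\kappa^2-\kappa_2^2)S_{\kappa_2}^{\mathbf n}$ has dominant order $-1$; together these give $N_\kappa-N_{\kappa_2}:H^s\to H^{s+1}$. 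The main obstacle throughout is the bookkeeping behind the order $+3$ assertions: one must combine the precise small-argument expansions of $H_0^{(1)}$ and $H_1^{(1)}$ with the geometric cancellation $(\mathbf x(t)-\mathbf x(\tau))\cdot\mathbf n(\tau)=O(|t-\tau|^2)$ to certify that the logarithmic coefficient vanishes to exactly second order on the diagonal. Once that is checked, everything else is routine application of the model log-operator lemma.
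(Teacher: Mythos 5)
Your proposal is correct, but note that the paper itself does not prove Theorem~\ref{regL} at all: the statement is simply quoted from the reference \cite{turc2}, so there is no in-paper proof to match. What you have written is a legitimate self-contained argument, and it is exactly the toolkit the paper assembles later, in Section~\ref{singular_int}, for the discretization analysis: your canonical splitting $K(t,\tau)=a(t,\tau)\ln(4\sin^2\frac{t-\tau}{2})+b(t,\tau)$ is the paper's \eqref{eq:split}; your refinement that a factor $\sin^2\frac{t-\tau}{2}$ in the log coefficient buys two extra orders is the paper's representation \eqref{eq:splitHT} for $K_k^\top$ and \eqref{eq:splitM} for $S_k$, with the multiplier decay $O(|m|^{-3})$ appearing explicitly there as the coefficients $I(m)=\frac14\bigl(\frac{1}{|m+1|}+\frac{1}{|m-1|}-\frac{2}{|m|}\bigr)$; your model log-operator multiplier $-1/|m|$ is the identity the paper records before \eqref{eq:second_compPS}; your convolution-times-smooth-coefficient workhorse lemma is the $W^\alpha$ machinery of Lemma~\ref{lemma:newLemma}; and your treatment of $N_k$ via $N_k=k^2S_k^{\mathbf n}+\partial_s S_k\partial_s$ is the Maue-type formula the paper records in \eqref{eq:normal_double}. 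The order bookkeeping in all six claims is sound, including the two difference estimates, where the cancellation of the wavenumber-independent $\ln r$ coefficient (via $J_0(\kappa r)-J_0(\kappa_2 r)=O(r^2)$) gives the three-order gain, and the non-cancelling term $(\kappa^2-\kappa_2^2)S_{\kappa_2}^{\mathbf n}$ correctly limits $N_\kappa-N_{\kappa_2}$ to one order. Two cosmetic slips, neither affecting the result: relative to $\ln(4\sin^2\frac{t-\tau}{2})$ the diagonal value of the log coefficient of $S_k$ is $-\frac{1}{4\pi}$, not $-\frac{1}{2\pi}$ (the factor $2$ comes from $\ln r=\frac12\ln(4\sin^2\frac{t-\tau}{2})+\text{smooth}$, consistent with \eqref{eq:splitM}); and the remainder in your expansion of $-rG_k'(r)$ is not literally smooth in $r^2$ but contains $r^4\ln r$ terms — these still carry the factor $r^2$, so the second-order diagonal vanishing of the log coefficient, which is all you use, survives.
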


A widely used boundary integral formulation of the transmission problem~\eqref{eq:Ac_i}-\eqref{eq:bc} consists of the the following pair of integral equations whose two unknowns are values of the total exterior field $u = u^1+u^{inc}$ and
its normal derivative $\frac{\partial u}{\partial n}$ on $\Gamma$:
\begin{equation}\label{eq:system_trans}
\begin{array}{rcl}
 \displaystyle
  \frac{\nu^{-1}+1}{2}u(\mathbf x) + (K_2-\nu^{-1}K_1)(u)(\mathbf x) +\nu^{-1} (S_1 - S_2)\left(\frac{\partial u}{\partial n}\right)(\mathbf x)&=& \displaystyle\nu^{-1} u^{inc}(\mathbf x) \\
   \displaystyle
  \frac{\nu^{-1}+1}{2}\frac{\partial u}{\partial n}(\mathbf x) + (K_1^\top-\nu^{-1} K^\top_2)\left(\frac{\partial u}{\partial n}\right)(\mathbf x) -(N_1 - N_2)(u)(\mathbf x)&=& \displaystyle\frac{\partial u^{inc}}{\partial n}(\mathbf x),
\end{array}
\end{equation}
($\mathbf x\in\Gamma$). In view of the results in Theorem~\ref{regL}, this system is Fredholm of the second kind in $H^s(\Gamma)\times H^s(\Gamma)$. In addition, the system~\eqref{eq:system_trans} can be shown to be uniquely solvable~\cite{KleinmanMartin}. In what follows we refer to the integral equations~\eqref{eq:system_trans} by CFIESK. We introduced recently regularized combined field integral equation formulations of transmission problems~\cite{turc2}. We look for fields $(u^1,u^2)$ defined as
\begin{eqnarray*}
u^1(\mathbf{z})&=& 
DL_1(\widetilde{R}_{11}a+\widetilde{R}_{12}b)(\mathbf{z})-SL_1(\widetilde{R}_{21
}a+\widetilde{R}_{22} b)(\mathbf{z}),\quad 
\mathbf{z}\in D_1\\
u^2(\mathbf{z})&=& -DL_2(\widetilde{R}_{11} a+\widetilde{R}_{12} 
b-a)(\mathbf{z})+\nu^{-1}SL_2(\widetilde{R}_{21} a+\widetilde{R}_{22} 
b-b)(\mathbf{z}),\quad \mathbf{z}\in D_2,
\end{eqnarray*} 
where the regularizing operators $\widetilde{R}_{ij},1\leq i,j\leq 2$ are defined as
\begin{equation}
\label{eq:tildR1}
\begin{array}{rclrcl} 
\widetilde{R}_{11}&:=&\displaystyle\frac{\nu}{1+\nu}I,\quad&
\widetilde{R}_{12}&:=&\displaystyle-\frac{2}{1+\nu}S_{\kappa}\\[1.25ex]
\widetilde{R}_{21}&:=&\displaystyle\frac{2\nu}{1+\nu}N_{\kappa},\quad&
\widetilde{R}_{22}&:=&\displaystyle\frac{1}{1+\nu}I,
\end{array}
\end{equation}
where $\kappa=\kappa_1 + i\varepsilon$ with $\kappa_1>0$ and $\varepsilon>0$. The enforcement of transmission boundary conditions~\eqref{eq:bc} leads to the following Generalized Combined Source Integral Equations (GCSIE) that are uniquely solvable in $H^{s}(\Gamma)\times 
H^{s}(\Gamma)$~\cite{turc2}: 
\begin{eqnarray}\label{eq:matrix_explicit}
\left(\begin{array}{cc}\tilde{D}_{11}&\tilde{D}_{12}\\\tilde{D}_{21}&\tilde{D}_{22}\end{array}\right)\left(\begin{array}{c}a\\b\end{array}\right)&=&-\left(\begin{array}{c}\gamma_D^1 u^{inc}\\\gamma_N^1 u^{inc}\end{array}\right)\nonumber\\
\tilde{D}_{11}&:=&I-\frac{1}{1+\nu}K_2+\frac{\nu}{1+\nu}K_1-\frac{2\nu}{1+\nu}S_1(N_{\kappa}-N_1)-\frac{2\nu}{1+\nu}(K_1)^2\nonumber\\
&&-\frac{2}{1+\nu}S_2(N_{\kappa}-N_2)-\frac{2}{1+\nu}(K_2)^2\nonumber\\
\tilde{D}_{12}&:=&\frac{1}{1+\nu}(S_2-S_1)-\frac{2}{1+\nu}(K_1+K_2)S_{\kappa}\nonumber\\
\tilde{D}_{21}&:=&\frac{\nu}{1+\nu}(N_1-N_2)-\frac{2\nu}{1+\nu}(K_1^T+K_2^T)N_{\kappa}\nonumber\\
\tilde{D}_{22}&:=&I+\frac{\nu}{1+\nu}K_2^T-\frac{1}{1+\nu}K_1^T-\frac{2}{1+\nu}(N_1-N_{\kappa})S_{\kappa}\nonumber\\
&&-\frac{2\nu}{1+\nu}(N_2-N_{\kappa})S_{\kappa}-2(K_{\kappa}^T)^2.
\end{eqnarray}
If we replace the regularizing operators in equations~\eqref{eq:tildR1} by their {\em periodic} principal symbols in the sense of pseudodifferential operators~\cite{Saranen,Taylor}, we obtain regularized formulations that are uniquely solvable in the spaces $H^s[0,2\pi]\times H^s[0,2\pi]$~\cite{turc2}. Define
\begin{equation}\label{eq:PS}
\sigma_0(N_{\kappa})(\xi)=-\frac{1}{2}\sqrt{|\xi|^2-\kappa^2}\qquad \sigma_0(S_{\kappa})(\xi)=\frac{1}{2\sqrt{|\xi|^2-\kappa^2}},
\end{equation}
where $\kappa=\kappa_1+i\varepsilon$ with $\kappa_1>0$ and $\varepsilon>0$ and where the square roots are chose in equation~\eqref{eq:PS} so that $\Im(\sigma_0(N_{\kappa})) >0$ and $\Im(\sigma_0(S_{\kappa}))>0$. We define then the operators 
\begin{equation}\label{eq:defPS1}
(PS(N_{\kappa})\phi)({\bf x}(t)):=\frac{1}{|{\bf x}'(t)|}\sum_{n\in\mathbb{Z}}\sigma_0(N_{\kappa})(n)\widehat{\phi}_n e^{int},\quad \widehat{\phi}_n:=\int_0^{2\pi} (\phi\circ{\bf x})(\tau) e^{-in\tau}\,{\rm d}\tau 
\end{equation} 
and
\begin{equation}\label{eq:defPS2}
(PS(S_{\kappa})\psi)({\bf x}(t)):=\sum_{n\in\mathbb{Z}}\sigma_0(S_{\kappa})(n) \tilde{\psi}_n
e^{int} 
\quad \tilde{\psi}_n:=\int_0^{2\pi} (\psi\circ {\bf x})(\tau))|\mathbf x'(\tau)| e^{-in\tau}\,{\rm d}\tau  
\end{equation}
for $2\pi-$periodic functions.  It follows from their definition that 
$PS(N_{\kappa}):H^s(\Gamma)\to H^{s-1}(\Gamma)$ and 
$PS(S_{\kappa}):H^s(\Gamma)\to H^{s+1}(\Gamma)$.  


Now we seek for $(a^1, b^1)$ so that the solution of 
\eqref{eq:Ac_i}--\eqref{eq:bc} can be written as
\begin{eqnarray*}
u^1&=& 
DL_1[PS(\widetilde{R}_{11})a^1+PS(\widetilde{R}_{12})b^1]-SL_1[PS(\widetilde{R}_{21
})a^1+PS(\widetilde{R}_{22}) b^1],\quad 
\text{in $D_1$}\\
u^2 &=& -DL_2[PS(\widetilde{R}_{11}) a^1+PS(\widetilde{R}_{12}) 
b^1-a^1] +\nu^{-1}SL_2[PS(\widetilde{R}_{21}) a^1+PS(\widetilde{R}_{22}) 
b^1-b^1],\quad 
\text{in $D_2$}
\end{eqnarray*} 
where $PS(\widetilde{R}_{ij}),1\leq i,j\leq 2$ are regularizing operators  defined as 
\[
\begin{array}{rclrcl} 
PS(\widetilde{R}_{11})&:=&\displaystyle\frac{\nu}{1+\nu}I,\quad&
PS(\widetilde{R}_{12})&:=&\displaystyle-\frac{2}{1+\nu}PS(S_{\kappa})\\[1.25ex]
PS(\widetilde{R}_{21})&:=&\displaystyle\frac{2\nu}{1+\nu}PS(N_{\kappa}),\quad&
PS(\widetilde{R}_{22})&:=&\displaystyle\frac{1}{1+\nu}I.
\end{array} 
\]
The enforcement of transmission boundary conditions~\eqref{eq:bc} leads to the following Principal Symbol Generalized Combined Source Integral Equations (PSGCSIE) that are uniquely solvable in $H^{s}[0,2\pi]\times 
H^{s}[0,2\pi]$~\cite{turc2}: 
\begin{eqnarray}\label{eq:matrix_PSexplicit}
\left(\begin{array}{cc}PS\tilde{D}_{11}&PS\tilde{D}_{12}\\PS\tilde{D}_{21}&PS\tilde{D}_{22}\end{array}\right)\left(\begin{array}{c}a^1\\b^1\end{array}\right)&=&-\left(\begin{array}{c}\gamma_D^1 u^{inc}\\\gamma_N^1 u^{inc}\end{array}\right)
\end{eqnarray}
where
\begin{eqnarray}\label{eq:entriesPSexplicit}
PS\tilde{D}_{11}&:=&\frac{1}{2}I+\frac{\nu}{1+\nu}K_1-\frac{1}{1+\nu}K_2-\frac{2\nu}{1+\nu}(S_1+\nu^{-1}S_2)PS(N_{\kappa})\nonumber\\
PS\tilde{D}_{12}&:=&\frac{1}{1+\nu}(S_2 -S_1)-\frac{2}{1+\nu}(K_1+K_2)PS(S_{\kappa})\nonumber\\
PS\tilde{D}_{21}&:=&\frac{\nu}{1+\nu}(N_1-N_2)-\frac{2\nu}{1+\nu}(K^T_1+K^T_2)PS(N_{\kappa})\nonumber\\
PS\tilde{D}_{22}&:=&\frac{1}{2}I+\frac{\nu}{1+\nu}K^T_2-\frac{1}{1+\nu}K_1^T-\frac{2}{1+\nu}(N_1+\nu N_2)PS(S_{\kappa}).
\end{eqnarray}

Another possible formulations of the transmission problem~\eqref{eq:Ac_i}-\eqref{eq:bc} take on the form of {\em single} integral equations~\cite{KleinmanMartin}. Amongst several possible choices of such equations, we consider the following version (equation (7.4) in~\cite{KleinmanMartin}) which we have observed in practice to lead to better spectral properties. The main idea is to look for the field $u^2$ as a single layer potential, that is
\[
u^2(\mathbf{z})=-2[SL_2\varphi](\mathbf{z}),\ \mathbf{z}\in D_2
\] 
and use the transmission boundary conditions~\eqref{eq:bc} and the Green's identities to express $u^1$ in the form
\[
u^1(\mathbf{z})=\nu SL_1[(I+2K_2^\top)\varphi](\mathbf{z})-2DL_1[S_2\varphi](\mathbf{z}),\ \mathbf{z}\in D_1.
\]
It follows that
\[
\gamma_D^2u^2 = -2S_2\varphi\qquad \gamma_N^2u^2=-(I+2K_2^\top)\varphi
\]
and
\[
\gamma_D^1u^1=\nu S_1(I+2K_2^\top)\varphi-S_2\varphi-2K_1S_2\varphi\qquad 
\gamma_N^1u^1=-{\frac{\nu}{2}}\varphi-\nu K_2^\top\varphi + \nu K_1^\top(I+2K_2^\top)\varphi-2N_1S_2\varphi.
\]
Using these representations of the fields $u^1$ and $u^2$, the Neumann and Dirichlet traces of $u^1$ and $u^2$ on $\Gamma$ are used in a Burton-Miller type combination of the form
$-\left(\gamma_N^1u^1-i\eta \gamma_D^1u^1\right)+\left(\nu\gamma_N^2u^2-i\eta \gamma_D^2u^2\right)=\gamma_N^1u^{inc}-i\eta \gamma_D^1u^{inc}$~\cite[Eq.(7.4)]{BurtonMiller} to lead to the following boundary integral equation  
\begin{equation}\label{eq:single}
-\frac{1+\nu}{2}\varphi+\mathbf{K}\varphi-i\eta\mathbf{S}\varphi=\frac{\partial u^{inc}}{\partial n}-i\eta u^{inc},\quad \eta\in\mathbb{R}\,\quad \eta\neq 0,
\end{equation}
where  must use Calder\'on's identity $N_2S_2=-\frac14I + (K_2^\top)^2$
$$\mathbf{K}=-K_2^\top(\nu I-2K_2^\top)-\nu K_1^\top(I+2K_2^\top)+2(N_1-N_2)S_2$$
and
$$\mathbf{S}=-\nu S_1(I+2K_2^\top)-(I-2K_1)S_2,$$
We refer in what follows to equation~\eqref{eq:single} by SCFIE. The coupling parameter $\eta$ in equations~\eqref{eq:single} is typically taken to be equal to $k_1$. We present in next section a Nystr\"om method of discretization of all of the formulations CFIESK, GCSIE, PSGCSIE, and SCFIE.

\section{Numerical method\label{singular_int}}

\parskip 10pt plus2pt minus1pt
\parindent0pt

We present in this section Nystr\"om discretizations of GCSIE formulations~\eqref{eq:matrix_explicit} and PSGCSIE formulations~\eqref{eq:matrix_PSexplicit}. These discretizations can then be applied for the other two formulations CFIESK~\eqref{eq:system_trans} and SCFIE~\eqref{eq:single}. The Nystr\"om discretizations are based on extensions of the Nystr\"om discretization introduced in~\cite{KressH} that were recently used in~\cite{turc1}. We also derive error estimates for the solutions that are obtained through these discretizations.

\subsection{Parametrized integral layer operators }

Recall we have assumed that the boundary curve $\Gamma$ is smooth and we have a smooth $2\pi$ periodic parametrization $\mathbf{x}(t)=(x_1(t),x_2(t))$. That is,  $x_j:\mathbb{R}\to\mathbb{R}$ are analytic and $2\pi$ 
periodic with $|{\bf x}'(t)|>0$ for all $t$.


We can then introduce the parameterized version of the integral layer operators. Hence, we have first the parametrized single layer operator \eqref{eq:sl}
\begin{equation}\label{eq:sl:b}
(S_k\varphi)(t)=\int_0^{2\pi} M_k(t,\tau)\varphi(\tau)d\tau:=
\int_0^{2\pi} G_k(\mathbf x(t) -\mathbf x(\tau))\varphi(\tau)d\tau,
\end{equation} 
where $\varphi$ it is a sufficiently smooth $2\pi-$periodic function. Note that 
the norm of parametrization $|{\bf x}'(t)|$ does not appear in the kernel 
in \eqref{eq:sl:b}. Thus, we are assuming that it has been incorporated to the function 
$\varphi$. 
The parametrized double layer operator, see \eqref{eq:double}, is defined as follows 
\begin{equation}\label{eq:k:b}
(K_k\psi)(t)=\int_0^{2\pi} H_k(t,\tau)\psi(\tau)d\tau:=\int_0^{2\pi}\frac{\partial G_k(\mathbf x(t)-\mathbf x(\tau))}{\partial\mathbf{n}(\mathbf x(\tau))}|{\bf x}'(\tau)| \psi({\tau})d\tau\ 
\end{equation}
Notice that, unlike \eqref{eq:sl:b}, the norm of the parameterization is part of the kernel. 
The parametrized adjoint of the double layer cf. \eqref{eq:adj_double} is given by 
\begin{equation}\label{eq:kt:b}
(K_k^\top\varphi)(t)=\int_0^{2\pi} H^\top_k(t,\tau)\varphi(\tau)d\tau:=\int_0^{2\pi}|{\bf x}'(t)|\frac{\partial G_k(\mathbf x(t)-\mathbf x(\tau))}{\partial\mathbf{n}(\mathbf x(t))} \varphi(t)d\tau. 
\end{equation}
(Observe that $H^\top_k(t,\tau)=H_k(\tau,t)$).  Finally, for the hypersingular operator,  parametrizing the integral in \eqref{eq:normal_double}, multiplying by $|{\bf x}'(t)|$ and adding and subtracting $\frac{1}{4\pi}\ln(4\sin^2((t-\tau)/2)$ and applying integration by parts we {obtain}
\begin{eqnarray} \label{eq:N:b}
(N_k \psi)(t)&:=&\mathrm{PV}\frac{1}{4\pi}\int_0^{2\pi} \cot\frac{t-\tau}{2}\:\psi'(\tau)\,{\rm d}\tau
+\int_0^{2\pi}D_k(t,\tau)\psi(\tau)\,{\rm d}\tau
\end{eqnarray}
with
\begin{eqnarray}
D_k(t,\tau)\!\!&:=&\!\!\!
k^{2} M_k(t,\tau)(\mathbf x'(t))\cdot\ \mathbf x'(\tau)) -
\frac{\partial^2}{\partial t\ \partial\tau}\Big\{M_k(t,\tau)+\frac{1}{4\pi}\ln\Big(\sin^2\frac{t-\tau}{2}\Big)\Big\}.\qquad \label{eq:allkernels} 
\end{eqnarray} 
Note  we have used the fact that
\[                                              
|\mathbf x'(t)||\mathbf x'(\tau)|(\mathbf{n}(\mathbf x(t))\cdot\mathbf{n}(\mathbf x(\tau))=
 \mathbf x'(t))\cdot\ \mathbf x'(\tau).
\]
The integrals operators on $\Gamma$ and their
parametrized versions have been denoted with the same symbols. Furthermore, we will write also
$S_j,K_j, K^\top_j,
N_j^\top$, with $j=1,2$, for $S_{k_j},K_{k_j}, K_{k_j}^\top,
N_{k_j}^\top$. The context will avoid any possible confusion. 

Finally, and for the PSGCSIE equations, we introduce according to \eqref{eq:defPS1}-\eqref{eq:defPS2}
and the strategy  followed in \eqref{eq:sl:b}-\eqref{eq:allkernels}  
these parametrised versions:
\begin{eqnarray}\label{eq:defPS1b}
(PS(N_{\kappa})\psi)(t)&:=&\sum_{n\in\mathbb{Z}}\sigma_0(N_{\kappa})(n)\widehat{\psi}(n) e^{int}, 
\quad (PS(S_{\kappa})\varphi)(t):=\sum_{n\in\mathbb{Z}}\sigma_0(S_{\kappa})(n) \widehat{\varphi}(n) 
e^{int}  
\end{eqnarray} 
{where $\widehat{\psi}(n)$ denote the $n$th Fourier coefficient.

\subsection{Discretization of the GCSIE equations~\eqref{eq:matrix_explicit}\label{dGCSIE}}

 Let us consider $M_j$, $H_j$, $D_j$ the functions  appearing in
the kernels of  $S_j$, $K_j$ and $N_j$, see \eqref{eq:sl:b},
\eqref{eq:k:b} and \eqref{eq:N:b}--\eqref{eq:allkernels}.  
These functions are weakly singular and can 
be written in the form~\cite{KressH} for $j=1,2$
\begin{eqnarray}\label{eq:split}
M_j(t,\tau)&=&M_{j,1}(t,\tau)\ln\left(4\sin^2\frac{t-\tau}{2}\right)+M_{j,2}(t,\tau)\nonumber\\
H_j(t,\tau)&=&H_{j,1}(t,\tau)\ln\left(4\sin^2\frac{t-\tau}{2}\right)+H_{j,2}(t,\tau)\nonumber\\
D_j(t,\tau)&=&D_{j,1}(t,\tau)\ln\left(4\sin^2\frac{
t-\tau } { 2 } \right)+D_{j,2}(t,\tau)
\end{eqnarray}
for bivariate $2\pi$-periodic analytic functions $M_{j,1},H_{j,1},D_{j,1}$ 
and $M_{j,2},H_{j,2},D_{j,2}$. The main idea in the derivation of 
equations~\eqref{eq:split} is to decompose the fundamental solution 
$H_0^{(1)}(z)$ in the form $H_0^{(1)}(z)=J_0(z)+iY_0(z)$ and to use the fact 
that $J_0(z)$ and 
$Y_0(z)-\frac{2}{\pi}J_0(z)\ln z$
are analytic 
functions of $z$; similar decompositions are available for $H_1^{(1)}(z)$. For 
the kernels $H_j^T$ of the operators $K_j^T$ (see \eqref{eq:kt:b}) we use their increased smoothness 
(see Theorem~\ref{regL}) to represent them in the form
\begin{equation}\label{eq:splitHT}
H_j^T(t,\tau)=H_{j,1}^T(t,\tau)\sin^2\left(\frac{t-\tau}{2}\right)\ln\left(4\sin^2\frac{t-\tau}{2}\right)+H_{j,2}^T(t,\tau)
\end{equation}
for $j=1,2$ where the functions $H_{j,1}^T$ and $H_{j,2}^T$ are $2\pi$-periodic analytic functions. In principle, the operators $K_j^T$ can be represented in a similar manner to equations~\eqref{eq:split}. However, we favor the representation of the operators $K_j^T$ given in equations~\eqref{eq:splitHT} in order to handle in a stable manner the composition of operators $K_j^T$ with the hyper-singular operator $N_{\kappa}$ needed for the evaluation of the operators $\tilde{D}_{21}$ in equation~\eqref{eq:matrix_explicit}.

The same splitting strategy, unfortunately, does not work in the case when the kernels involve the Hankel function $H_0^{(1)}(\kappa\:\cdot\ )$ when $\Im\kappa>0$. The reasons are similar to those documented in~\cite{turc1}: the Bessel function $J_0(\kappa|x|)$ grows exponentially as $|x|\to\infty$, while $H_0^1({\kappa|x|})$ actually {\em
decays} exponentially, as $|x|$ increases, and thus the
splitting strategy employed for the kernels $M_j$ generally gives rise to significant cancellation errors if used throughout the
integration domain for the kernels $M_{\kappa}$. In order to avoid subtraction of exponentially large quantities, we
evaluate the operator $S_{\kappa}$ by means of a slight modification of
the approach used for the operators with kernels $M_j,j=1,2$ that contain {\em real} wavenumbers $k_j$: 
we use the truncated decomposition
\begin{eqnarray}
\label{eq:splitImagM}
M_{\kappa}(t,\tau)&=&\chi(|\kappa||{\bf x}(t)-{\bf x}(\tau)|^4)\left\{\tilde{M}_1(t,\tau)\ln\left(4\sin^2\frac{t-\tau}{2}\right)+\tilde{M}_2(t,\tau)\right\}\nonumber\\
&+&(1-\chi(|\kappa||{\bf x}(t)-{\bf x}(\tau)|^4))M_{\kappa}(t,\tau)=M_{\kappa,1}(t,\tau)\ln\left(4\sin^2\frac{t-\tau}{2}\right)+M_{\kappa,2}(t,\tau)\nonumber\\
\end{eqnarray}
where 
 $\chi\in C_0^\infty(\mathbb{R})$ is a function such that $\chi(t)\equiv 1,\ 
|t|\leq 1/2$ and $\chi(t)\equiv 0,\ |t|\geq 1$. It can be checked easily that 
decomposition~\eqref{eq:splitImagM} does not suffer from cancellation errors and 
that the kernels $M_{\kappa,j}(t,\tau),\ j=1,2$ in 
equation~\eqref{eq:splitImagM} are smooth (but not analytic) functions of $t$ 
and $\tau$. Applying the strategy outlined above for the case of the kernel 
$M_{\kappa}$ to the kernels $D_{\kappa}$ and $H_{\kappa}^T$
leads to splittings of the form
\begin{eqnarray}\label{eq:splitK}
D_{\kappa}(t,\tau)&=&
D_{\kappa,1}(t,\tau)\ln\left(4\sin^2\frac{t-\tau}{2}
\right)+D_ {\kappa,2}(t,\tau)\\
H_{\kappa}^T(t,\tau)&=&H_{\kappa,1}^T(t,\tau)\ln\left(4\sin^2\frac{t-\tau}{2}\right)+H_{\kappa,2}^T(t,\tau)
\end{eqnarray}
for $2\pi$-periodic smooth functions 
$D_{\kappa,1},H_{\kappa,1}^T$ and 
$D_{\kappa,2},H_{\kappa,2}^T$. Having described the splitting of 
every boundary integral operator that enter equation~\eqref{eq:matrix_explicit}, 
we express next the parametric form of the operators $\tilde{D}_{ij},i,j=1,2$ 
in equation~\eqref{eq:matrix_explicit}. In order to do this, we need several 
definitions. For a given $2\pi$ periodic function $\psi$ we introduce the 
following operators $A_{\ell}^j,\ell=1,\ldots,8$ and $j=1,2$, 
$A_{\ell}^{\kappa},\ell=1,\ldots,6$, and $T_0$ by
\begin{eqnarray}\label{ops_param1}
(A_{1}^j\psi)(t)&:=&\int_0^{2\pi}M_{j,1}(t,\tau)\ln\left(4\sin^2\frac{t-\tau}{2}\right)\psi(\tau)d\tau\nonumber\\
(A_{2}^j\psi)(t)&:=&\int_0^{2\pi}M_{j,2}(t,\tau)\psi(\tau)d\tau\nonumber\\
(A_{1}^{\kappa}\psi)(t)&:=&\int_0^{2\pi}M_{\kappa,1}(t,\tau)\ln\left(4\sin^2\frac{t-\tau}{2}\right) \psi(\tau)
d\tau\nonumber\\
(A_{2}^{\kappa}\psi)(t)&:=&\int_0^{2\pi}M_{\kappa,2}(t,\tau)\psi(\tau)d\tau\nonumber\\
(A_{3}^j\psi)(t)&:=&\int_0^{2\pi}H_{j,1}(t,\tau)\ln\left(4\sin^2\frac{t-\tau}{2}\right)\psi(\tau)d\tau\nonumber\\
(A_{4}^j\psi)(t)&:=&\int_0^{2\pi}H_{j,2}(t,\tau)\psi(\tau)  d\tau\nonumber\\
(A_{5}^j\psi)(t)&:=&\int_0^{2\pi}H_{j,1}^T(t,\tau)\sin^2\left(\frac{t-\tau}{2}\right)\ln\left(4\sin^2\frac{t-\tau}{2}\right)\psi(\tau)d\tau\nonumber\\
(A_{6}^j\psi)(t)&:=&\int_0^{2\pi}H_{j,2}^T(t,\tau)\psi(\tau)d\tau\nonumber\\
(A_{5}^{\kappa}\psi)(t)&:=&\int_0^{2\pi}H_{\kappa,1}^T(t,\tau)\ln\left(4\sin^2\frac{t-\tau}{2}\right)\psi(\tau)d\tau\nonumber\\
(A_{6}^{\kappa}\psi)(t)&:=&\int_0^{2\pi}H_{\kappa,2}^T(t,\tau)\psi(\tau)d\tau\nonumber\\
(A_{7}^{j}\psi)(t)&:=&\int_0^{2\pi}D_{j,1}(t,\tau)\ln\left(4\sin^2\frac{t-\tau}{2}\right)\psi(\tau)d\tau\nonumber\\
(A_{8}^{j}\psi)(t)&:=&\int_0^{2\pi}D_{j,2}(t,\tau)\psi(\tau)d\tau\nonumber\\
(A_{7}^{\kappa}\psi)(t)&:=&\int_0^{2\pi}D_{\kappa,1}(t,\tau)\ln\left(4\sin^2\frac{t-\tau}{2}\right)\psi(\tau)d\tau\nonumber\\
(A_{8}^{\kappa}\psi)(t)&:=&\int_0^{2\pi}D_{\kappa,2}(t,\tau)\psi(\tau)d\tau\nonumber\\
(T_0\psi)(t)&:=&\frac{1}{4\pi}\int_0^{2\pi}\cot{\frac{\tau-t}{2}}\psi'(\tau)d\tau.
\end{eqnarray}

Using the parametric equations~\eqref{ops_param1} and the identities
\[
\begin{aligned}
 S_j&=A_1^j+A_2^j,\quad  & K_j&=A_3^j+A_4^j,\quad &
K_j^\top &= A_5^j+A_6^j,\quad &  N_j&=T_0+A_7^j+A_8^j\\
 S_{\kappa}&=A_1^{\kappa}+A_2^{\kappa},\quad &
K_{\kappa}^\top &= A_5^{\kappa}+A_6^{\kappa},\quad&  N_{\kappa}&= T_0+A_7^{\kappa}+A_8^{\kappa}
\end{aligned}
\]
we can describe the parametric equations of operators $\tilde{D}_{ij},i,j=1,2$ in equation~\eqref{eq:matrix_explicit}. Define
\begin{eqnarray}\label{eq:DAs}
\tilde{D}_{11}&:=&I-\frac{1}{1+\nu}(A_3^2+A_4^2)+\frac{\nu}{1+\nu}(A_3^1+A_4^1)-\frac{2\nu}{1+\nu}[(A_1^1+A_2^1)(A_7^{\kappa}+A_8^{\kappa}-A_7^1-A_8^1)]\nonumber\\
&&-\frac{2\nu}{1+\nu}[(A_3^1+A_4^1)(A_3^1+A_4^1)]-\frac{2}{1+\nu}[(A_1^2+A_2^2)(A_7^{\kappa}+A_8^{\kappa}-A_7^2-A_8^2)]\nonumber\\
&&-\frac{2}{1+\nu}[(A_3^2+A_4^2)(A_3^2+A_4^2)]\nonumber\\
\tilde{D}_{12} &:=&\frac{1}{1+\nu}(A_1^2+A_2^2)-\frac{1}{1+\nu}
(A_1^1+A_2^1)-\frac{2}{1+\nu}[(A_3^1+A_3^2+A_4^1+A_4^2)(A_1^{\kappa}+A_2^{
\kappa})]\nonumber\\
\tilde{D}_{21}&:=&\frac{\nu}{1+\nu}(A_7^1+A_8^1-A_7^2-A_8^2)-\frac
{2\nu} {1+\nu}[(A_5^1+A_5^2+A_6^1+A_6^2)T_0]\nonumber\\
&&-\frac{2\nu}{1+\nu}[(A_5^1+A_5^2+A_6^1+A_6^2)(A_7^{\kappa}
+A_8^{\kappa})]\nonumber\\
\tilde{D}_{22} &:=&I+\frac{\nu}{1+\nu}(A_5^2+A_6^2)-\frac{1}{1+\nu}
(A_5^1+A_6^1)
-\frac{2}{1+\nu}[(A_7^1+A_8^1-A^{\kappa}_7-A^{\kappa}_8)(A_1^{\kappa}+A_2^
{\kappa})]\nonumber\\
&&-\frac{2\nu}{1+\nu}[(A_7^2+A_8^2-A_7^{\kappa}-A_8^{\kappa})(A_1^{\kappa}
+A_2^{\kappa})
-2[(A_5^{\kappa}+A_6^{\kappa})(A_5^{\kappa}+A_6^{\kappa})].
\end{eqnarray}
Then, the equation we want to approximate is
 \begin{eqnarray}\label{eq:matrix_explicit:b}
\left(\begin{array}{cc}\tilde{D}_{11}&\tilde{D}_{12}\\\tilde{D}_{21}&\tilde{D}_{22}\end{array}\right)\left(\begin{array}{c}a\\b\end{array}\right)&=&\begin{pmatrix}
             f\\
             g
             \end{pmatrix}
\end{eqnarray}
where 
\begin{equation}\label{eq:rhs:b}
\begin{aligned}
  f(t)&:=-(\gamma_D^1 u^{inc})({\bf x}(t)), \qquad&
 g(t)&:=\:-|{\bf x}'(t)|(\gamma_N^1 u^{inc})({\bf x}(t)).\\ 
 a(t)&:=a({\bf x}(t)), &
 b(t)&:=-|{\bf x}'(t)|\:b({\bf x}(t)).\\
\end{aligned} 
\end{equation}
where $(a,b)$ in the right-hand-sides in the bottom line is the solution of \eqref{eq:matrix_explicit}.

We describe next a Nystr\"om method based on trigonometric interpolation that 
follows closely the quadrature method introduced by Kress in~\cite{KressH}, 
which in turn relies on the logarithmic quadrature methods introduced by 
Kussmaul~\cite{kusmaul} and Martensen~\cite{martensen}. We choose 
$n\in\mathbb{N}$ and the equidistant mesh $t_j^{(n)}=\frac{j\pi}{n},\ 
j=0,1,\ldots,2n-1$. With respect to these nodal points the interpolation problem 
in the space $\mathbb{T}_n$ of trigonometric 
polynomials of the form
\[
v(t)=\sum_{m=0}^n a_m\cos{mt}+\sum_{m=1}^{n-1}b_m\sin{mt}
\]
is uniquely solvable~\cite{Kress}. We denote by $P_n:C[0,2\pi]\to 
\mathbb{T}_n$ the corresponding interpolation operator and we will 
use in the error analysis the estimate~\cite[Th. 11.8]{Kress} 
\begin{equation}\label{eq:trig_int}
\|P_n\varphi-\varphi\|_q\leq C_{p,q} n^{q-p}\|\varphi\|_p,\ 0\leq q\leq p,\ \frac{1}{2}<p
\end{equation}  
which is valid for all $\varphi\in H^p[0,2\pi]$ and a constant $C$ depending on $p$ 
and $q$, where  $\|\varphi\|_p$ is the corresponding Sobolev norm of $\varphi$ cf.
\eqref{eq:sobolev}. We use the quadrature rules~\cite{KressH}
\begin{equation}
\int_0^{2\pi}\ln\left(4\sin^2\frac{t-\tau}{2}\right)\psi(\tau)d\tau\approx\int_0^
{2\pi}\ln\left(4\sin^2\frac{t-\tau}{2}\right)(P_n\psi)(\tau)d\tau=\sum_{j=0}^{2n-1}R_j^{(n)}(t)\psi(t_j^{(n)})\label{eq:quad1}
\end{equation}
 where the expressions $R_j^{(n)}(t)$ are given by
$$R_j^{(n)}(t):=-\frac{2\pi}{n}\sum_{m=1}^{n-1}\frac{1}{m}\cos{m(t-t_j^{(n)})}
-\frac{\pi}{n^2}\cos{n(t-t_j^{(n)})}.$$
The quadrature rule in equation~\eqref{eq:quad1} can be easily adapted to the 
case
\begin{eqnarray}\label{eq:quad11}
\int_0^{2\pi}\sin^2\left(\frac{t-\tau}{2}\right)\ln\left(4\sin^2\frac{t-\tau}{2}
\right)\psi(\tau)d\tau&\approx&\int_0^{2\pi}\sin^2\left(\frac{t-\tau}{2}
\right)\ln\left(4\sin^2\frac{t-\tau}{2}\right)(P_n\psi)(\tau)d\tau\nonumber\\
&=&\sum_{j=0}^{2n-1}Q_j^{(n)}(t)\psi(t_j^{(n)})
\end{eqnarray}
 which is relevant to evaluation of operators $A_5^j,j=1,2$. In 
equation~\eqref{eq:quad11} the expressions $Q_j^{(n)}(t)$ are given by
\[
 Q_j^{(n)}(t):=\frac{1}{2n}\left(I(0)+2\sum_{m=1}^{n-1}I(m)\cos{m(t-t_j^{(n)})}
+I(n)\cos{n(t-t_j^{(n)})}\right)
\]
in terms of the coefficients $I(m)$ which are defined for $m\in\mathbb{Z}$ as
\[
I(m):=\frac{1}{2\pi}\int_0^{2\pi}\sin^2\frac{t}{2}\ln\left(
4 \sin^2\frac { t } { 2 } 
\right)e^{imt}dt=\begin{cases}\frac{1}{2} & \text{if $m=0$,}\\ -\frac{3}{8} & 
\text{if $m=\pm 1$},\\ 
\frac{1}{4}\left(\frac{1}{|m+1|}+\frac{1}{|m-1|}-\frac{2}{|m|}\right) & 
\text{otherwise.} \end{cases}
\]
For the evaluation of the operator $T_0$ when applied on 
trigonometric polynomials $\varphi_n\in \mathbb{T}_n$ 
we use cf. \cite{KressH}
\begin{equation}\label{eq:quad3}
  \frac{1}{4\pi}\mathrm{PV}\int_0^{2\pi}\cot{\frac{t-\tau}{2}}\varphi_n'(\tau)d\tau=
  \sum_{j=0}^{2n-1}T_j^{(n)}(t)\varphi_n(t_j^{(n)})
\end{equation} 
%
where
$$T_j^{(n)}(t)=\frac{1}{2n}\sum_{m=1}^{n-1} m\ 
\cos{m(t-t_j^{(n)})}+\frac{1}{4}\cos{n(t-t_j^{(n)})}.$$
Using the quadrature rules~\eqref{eq:quad1} we define the numerical quadrature 
operators for operators whose kernels are a product of a singular logarithmic 
term and an infinitely differentiable function. More precisely, for operators of 
the type
\begin{equation}\label{eq:typeA}
(A\varphi)(t)=\int_0^{2\pi}\ln\left(4\sin^2\frac{t-\tau}{2}\right)K(t,
\tau)\varphi(\tau)d\tau
\end{equation}
where $K(t,\tau)$ is infinitely differentiable in both variables $t$ and $\tau$, 
we define its numerical quadrature operator by
\begin{equation}\label{eq:typeAn}
(A_n\varphi)(t):=\int_0^{2\pi}\ln\left(4\sin^2\frac{t-\tau}{2}\right)(P_nK(t,
\cdot)\varphi)(\tau)d\tau
=\sum_{j=0}^{2n-1}R_j^{(n)}(t)K(t,t_j^{(n)})\varphi(t_j^{(n)}). 
\end{equation}
We use the generic operators $A_n$ introduced above to define the quadrature 
operators $A_{1,n}^j,\ A_{3,n}^j,\ A_{7,n}^j$ for $j=1,2$ as well as quadrature 
operators  $A_{1,n}^{\kappa},\ A_{5,n}^{\kappa},\ A_{7,n}^{\kappa}$. Considering
\eqref{eq:quad11}, for
any operator of the form 
\begin{equation}\label{eq:typeB}
(B\varphi)(t)=\int_0^{2\pi}\sin^2\left(\frac{t-\tau}{2}
\right)\ln\left(4\sin^2\frac{t-\tau}{2}\right)H(t,\tau)\varphi(\tau)d\tau,
\end{equation}
with $H(t,\tau)$ an infinitely 
differentiable $2\pi$ periodic function in both variables $t$ and $\tau$, we will
introduce
the discrete approximation given 
by
\begin{equation}\label{eq:typeBn}
(B_n\varphi)(t):=\int_0^{2\pi}\sin^2\left(\frac{t-\tau}{2}\right)\ln\left(4\sin^2\frac{t-\tau}{2
}\right)(P_nH(t,\cdot)\psi)(\tau)d\tau
=\sum_{j=0}^{2n-1}Q_j^{(n)}(t)H(t,t_j^{(n)})\psi(t_j^{(n)}).
\end{equation}
This strategy is applied to define the quadrature 
operators $A_{5,n}^j,\ j=1,2$. 

The trapezoidal rule 
\[
\int_0^{2\pi}\psi(\tau)d\tau\approx\int_0^{2\pi}(P_n\psi)(\tau)d\tau=\frac{\pi}{n}
\sum_{j=0}^{2n-1}\psi(t_j^{(n)}).
\]
is applied to define 
quadrature operators for operators of the form
\begin{equation}\label{eq:typeE}
(E\varphi)(t)=\int_0^{2\pi}J(t,\tau)\varphi(\tau)d\tau
\end{equation}
for smooth kernels $J(t,\tau)$ as
\begin{equation}\label{eq:typeEn} 
(E_n\varphi)(t):=\int_0^{2\pi}(P_nJ(t,\,\cdot\,)\varphi)(\tau)d\tau=\frac{\pi}{n}\sum_
{j=0}^{2n-1}J(t,t_j^{(n)})\varphi(t_j^{(n)}).
\end{equation}
This approach is followed to define the quadrature operators 
$A_{2,n}^j,\ A_{4,n}^{j},\ A_{6,n}^j,\ A_{8,n}^j$ for $j=1,2$ as well as  $A_{2,n}^{\kappa},\ 
A_{6,n}^{\kappa}$ and $A_{8,n}^{\kappa}$. Having described all the types of integral operators that 
enter the definition of operators $\tilde{D}_{ij},i,j=1,2$ in 
equation~\eqref{eq:DAs}, we present their mapping properties: 
\begin{align*}
&A:H^p[0,2\pi]\to 
H^{p+1}[0,2\pi] ,&  B:&H^p[0,2\pi]\to H^{p+3}[0,2\pi]  ,\\
&E:H^p[0,2\pi]\to 
H^{p+s}[0,2\pi] ,&T_0:&H^p[0,2\pi]\to H^{p-1}[0,2\pi]
\end{align*}
are continuous for all 
$p$ and $s\ge 0$.

 With these notations in hand, we derive the approximating equation to the 
 GCSIE formulation~\eqref{eq:matrix_explicit} we obtain the 
following linear system
\begin{eqnarray}\label{eq:discrete_02}
a_n&-&\frac{1}{1+\nu}P_n(A_{3,n}^2+A_{4,n}^2)a_n+\frac{\nu}{1+\nu}
(A_{3,n}^1+A_{4,n}^1)a_n\nonumber\\
&-&\frac{2\nu}{1+\nu}P_n(A_{1,n}^1+A_{2,n}^1)(A_{
7,n}^{\kappa}+A_{8,n}^{\kappa}-A_{7,n}^1-A_{8,n}^1)a_n\nonumber\\
&-&\frac{2\nu}{1+\nu}P_n(A_{3,n}^1+A_{4,n}^1)(A_{
3,n}^1+A_{4,n}^1)a_n-\frac{2}{1+\nu}P_n(A_{1,n}^2+A_{2,n}^2)(A_{
7,n}^{\kappa}+A_{8,n}^{\kappa}-A_{7,n}^2-A_{8,n}^2)a_n\nonumber\\
&-&\frac{2}{1+\nu}P_n(A_{3,n}^2+A_{4,n}^2)(A_{
3,n}^2+A_{4,n}^2)a_n\nonumber\\
&+&\frac{1}{1+\nu}P_n(A_{1,n}^2+A_{2,n}^2)b_n-\frac{1}{1+\nu}P_n(A_{1,n}^1+
A_{2,n}^1)b_n\nonumber\\
&-&\frac{2}{1+\nu}P_n(A_{3,n}^1+A_{3,n}^2+A_{4,n}^1+A_{4,n}^2)(A_
{ 1 , n } ^ { \kappa }+A_{2,n}^{\kappa})b_n=P_nf\nonumber\\
b_n&+&\frac{\nu}{1+\nu}P_n(A_{5,n}^2+A_{6,n}^2)b_n-\frac{1}{1+\nu}P_n(A_{5,n}^1+A_{6,n}^1)b_n\nonumber\\
&-&\frac{2}{1+\nu}P_n(A_{7,n}^1+A_{8,n}^1-A_{7,n}^{\kappa}-A_{8,n}^{\kappa}
)(A_{1,n}^{\kappa}+A_{2,n}^{\kappa})
b_n\nonumber\\
&-&\frac{2\nu}{1+\nu}P_n(A_{7,n}^2+A_{8,n}^2-A_{7,n}^{\kappa}-A_{8,n}^{
\kappa})(A_{1,n}^{\kappa}+A_{2,n}^{\kappa})
b_n-2P_n(A_{5,n}^{\kappa}+A_{6,n}^{\kappa})
(A_{5,n}^{\kappa}+A_{6,n}^{ \kappa})b_n\nonumber\\
&+&\frac{\nu}{1+\nu}P_n(A_{7,n}^1+A_{8,n}^1-A_{7,n}^2-A_{8,n}
^2)a_n\nonumber\\
&-&\frac{2\nu}{1+\nu}P_n(A_{5,n}^1+A_{5,n}^2+A_{6,n}^1+A_{6,n}^2)
T_ { 0}a_n\nonumber\\
&-&\frac{2\nu}{1+\nu}P_n(A_{5,n}^1+A_{5,n}^2+A_{6,n}^1
+A_{6,n}^2)(A_{7,n}^{\kappa}+A_{8,n}^{\kappa})a_n=P_ng.\nonumber\\
\end{eqnarray}
Observe that necessarily $a_n,b_n$ are trigonometric polynomials. 
Therefore, $T_{0}a_n$  in the equation above can be computed using 
\eqref{eq:quad3}.

We will now prove 
the convergence 
$\|a_n-a\|_p\to 0$ and $\|b_n-b\|_p\to 0,$ as $ n\to\infty$.
To this end, we use several results, which extends results established 
in \cite{Kress,turc1} for convergence of operators \eqref{eq:typeAn}, 
\eqref{eq:typeBn} and \eqref{eq:typeEn} to the continuous operators  
\eqref{eq:typeA}, 
\eqref{eq:typeB} and \eqref{eq:typeE}. 

Let $\alpha\geq 1$ and define the following Banach space of $2\pi$ periodic functions 
\begin{equation}\label{eq:cond_rho}
W^\alpha[0,2\pi]=\{\rho\in L^2[0,2\pi]:\ \triple{\rho}_\alpha<\infty\},\quad \triple{\rho}_\alpha:=\sup_{m\in\mathbb{Z}}\widehat{\rho}(m)(1+|m|^2)^{\frac{\alpha}{2}}
\end{equation}
where $\widehat{\rho}(m)$ is the $m$th Fourier coefficient of
$\rho$ cf. \eqref{eq:sobolev}. 
For $\alpha\geq 1$ and $\rho\in W^\alpha$ consider the boundary 
integral operators
\[
(\Lambda\varphi)(t):=\int_0^{2\pi} K(t,\tau) \rho(t-\tau)\varphi(\tau)\,{\rm d}\tau
\]
with $K$ a smooth  bivariate  2$\pi$-periodic function. We introduce its numerical approximation
\[
(\Lambda_n\varphi)(t):=\int_0^{2\pi} \rho(t-\tau) P_n(K(t,\cdot) \varphi)(\tau)\,{\rm d}\tau. 
\]
Note that the operators considered here  and their corresponding discretizations
fit into this frame by taking $\rho(\tau)= \ln\sin^2(\tau/2)$ in which case $\rho\in W^1[0,2\pi]$, for 
\eqref{eq:typeA}-\eqref{eq:typeAn}, $\rho=\sin^2(\tau/2) \ln\sin^2(\tau/2)$ in which case $\rho\in W^3[0,2\pi]$ in 
\eqref{eq:typeB}-\eqref{eq:typeBn} and $\rho\equiv 1$ in which case $\rho\in W^\alpha[0,2\pi]$ for all $\alpha\geq 1$ in case 
\eqref{eq:typeE}-\eqref{eq:typeEn}.

Next lemma studies the error $\|\Lambda\varphi-\Lambda_n\varphi\|_{p}$.

\begin{lemma}\label{lemma:newLemma}
 Under the assumptions stated above and for all $p\ge \alpha\ge 1$ and   $q\ge 0$  
 there exists $C_{p,q}$ so that
 \[
  \|\Lambda\varphi-\Lambda_n\varphi\|_{p}\le C_{p,q}n^{-q-\alpha}\|\varphi\|_{p+q}.
 \]
\end{lemma}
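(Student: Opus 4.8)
The plan is to rewrite the error as a single integral against $\rho$ of the $\tau$-interpolation error of the product $K(t,\cdot)\varphi$, and then to decouple the two variables by expanding the smooth kernel $K$ in a Fourier series in its first argument. Setting $g_t(\tau):=K(t,\tau)\varphi(\tau)$, we have
\[
(\Lambda\varphi-\Lambda_n\varphi)(t)=\int_0^{2\pi}\rho(t-\tau)\big[g_t-P_ng_t\big](\tau)\,{\rm d}\tau .
\]
The essential difficulty is that $P_n$ acts in the variable $\tau$ while the Sobolev norm we must control is taken in $t$, and these variables are coupled both through $\rho(t-\tau)$ and through the $t$-dependence of $K$; the role of the Fourier expansion of $K$ is precisely to separate them.

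First I would record the scalar building block. The convolution operator $(L_\rho\psi)(t):=\int_0^{2\pi}\rho(t-\tau)\psi(\tau)\,{\rm d}\tau$ is a Fourier multiplier with symbol $2\pi\widehat{\rho}(m)$, so the bound $|\widehat{\rho}(m)|\le\triple{\rho}_\alpha(1+|m|^2)^{-\alpha/2}$ built into $W^\alpha[0,2\pi]$ gives $\|L_\rho\psi\|_p\le 2\pi\triple{\rho}_\alpha\|\psi\|_{p-\alpha}$, i.e.\ $L_\rho:H^{p-\alpha}[0,2\pi]\to H^p[0,2\pi]$ is bounded. If $K\equiv 1$ this already closes the argument: the interpolation error does not depend on $t$ and $\|(\Lambda-\Lambda_n)\varphi\|_p=\|L_\rho(\varphi-P_n\varphi)\|_p\le C\|\varphi-P_n\varphi\|_{p-\alpha}$, so \eqref{eq:trig_int} applied with target index $p-\alpha$ and source index $p+q$ produces exactly the factor $n^{(p-\alpha)-(p+q)}=n^{-q-\alpha}$. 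The hypotheses $0\le p-\alpha\le p+q$ and $\tfrac12<p+q$ of \eqref{eq:trig_int} are guaranteed by $p\ge\alpha\ge 1$ and $q\ge 0$.

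For general smooth $K$ I would expand $K(t,\tau)=\sum_{\ell\in\mathbb{Z}}c_\ell(\tau)e^{i\ell t}$. Since $K$ is infinitely differentiable and $2\pi$-periodic, repeated integration by parts in $t$ yields, for every $r$ and every $N$, the decay $\|c_\ell\|_r\le C_{N,r}(1+|\ell|)^{-N}$. Because $P_n$ ignores the constant-in-$\tau$ factors $e^{i\ell t}$, the interpolation error splits as $g_t-P_ng_t=\sum_\ell e^{i\ell t}\,r^{(\ell)}$ with $r^{(\ell)}:=c_\ell\varphi-P_n(c_\ell\varphi)$ independent of $t$, whence
\[
(\Lambda\varphi-\Lambda_n\varphi)(t)=\sum_{\ell\in\mathbb{Z}} e^{i\ell t}\,(L_\rho r^{(\ell)})(t).
\]
I would then estimate the series term by term. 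Multiplication by $e^{i\ell t}$ shifts frequencies, so $\|e^{i\ell\cdot}v\|_p\le 2^{p/2}(1+|\ell|^2)^{p/2}\|v\|_p$ for $p\ge 0$; combining this with the mapping bound for $L_\rho$, the interpolation estimate \eqref{eq:trig_int} for $r^{(\ell)}$, and the algebra property $\|c_\ell\varphi\|_{p+q}\le C\|c_\ell\|_{p+q}\|\varphi\|_{p+q}$ (valid since $p+q>\tfrac12$) gives
\[
\|e^{i\ell\cdot}(L_\rho r^{(\ell)})\|_p\le C\,(1+|\ell|^2)^{p/2}\,n^{-q-\alpha}\,\|c_\ell\|_{p+q}\,\|\varphi\|_{p+q}.
\]
Summing over $\ell$, the rapid decay of $\|c_\ell\|_{p+q}$ dominates the polynomial factor $(1+|\ell|^2)^{p/2}$ (choose $N>p+1$), so $\sum_\ell(1+|\ell|^2)^{p/2}\|c_\ell\|_{p+q}<\infty$ and the claimed bound $\|\Lambda\varphi-\Lambda_n\varphi\|_p\le C_{p,q}n^{-q-\alpha}\|\varphi\|_{p+q}$ follows. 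The step I expect to need the most care is the interchange of $P_n$ with the Fourier series of $K$ and the bookkeeping that keeps the $t$- and $\tau$-regularity separate; once the Fourier expansion of $K$ decouples the two variables, the estimate reduces to the scalar multiplier bound for $L_\rho$ together with \eqref{eq:trig_int}.
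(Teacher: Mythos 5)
Your proof is correct and is essentially the paper's own argument: both decouple the variables by Fourier-expanding the smooth kernel $K$, and then combine the $W^\alpha$ convolution bound, the trigonometric interpolation estimate \eqref{eq:trig_int}, a Sobolev product bound, and the superalgebraic decay of the kernel's Fourier coefficients, summing the resulting termwise estimates. The only difference is cosmetic: you expand $K$ in its first variable (so the interpolated functions are $c_\ell\varphi$ and the $t$-dependence is carried by the exponentials $e^{i\ell t}$, controlled via a frequency-shift bound), whereas the paper expands in the second variable, $K(t,\tau)=\sum_m k_m(t)e^{im\tau}$, interpolates $e^{im\cdot}\varphi$, and carries the $t$-dependence in the coefficients $k_m(t)$ via the product bound \eqref{eq:prop:avarphi} --- the two are mirror images yielding the same estimate.
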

\begin{proof}
The proof of this result follows closely that of Theorem 12.15 in \cite{Kress}. We start  pointing out several
facts. First, we can expand 
\[
 K(t,\tau)=\sum_{m=-\infty}^\infty k_m(t) e^{i m\tau},\quad 
 k_m(t):=\frac{1}{2\pi}\int_{0}^{ 2\pi } K(t,\tau) e^{-i m \tau}\,{\rm d}\tau.  
\]
Since $K$ is smooth and $2\pi$ periodic, 	
\begin{equation}\label{eq:prop:K}
 \sum_{m\in\mathbb{Z}} (1+|m|)^{P}\|k_{m}\|_P<\infty,\quad \forall P\in\mathbb{R}.
\end{equation}
On the other hand, given that $\rho\in W^\alpha[0,2\pi]$, the mapping 
\[
 \rho*\varphi:=\int_{0}^{2\pi} \rho(\,\cdot\,-\tau)\varphi(\tau)\,d\tau=\sum_{m\in\mathbb{Z}}\widehat{\rho}(m)\varphi_m
 e^{i m\,\cdot\,}
\]
is continuous from $H^p[0,2\pi]$ into $H^{p+\alpha}[0,2\pi]$ with (see \eqref{eq:cond_rho})
\[
 \|\rho*\varphi\|_{p}\le \triple{\rho}_\alpha\|\varphi\|_{p-\alpha}.
\]
The last ingredient in this proof is the bound 
\begin{equation}
\label{eq:prop:avarphi}
 \|a\varphi\|_{q}\le C_q \|a\|_{\max\{|q|,1\}} \|\varphi\|_q.
\end{equation}
(see \cite[Lemma 5.13.1]{Saranen}). Then, 
\[
   \big(\Lambda\varphi-\Lambda_n\varphi)(t)=\sum_{m\in\mathbb{Z}} k_m(t) 
\big(\rho*
   \big[e^{i m\,\cdot\,} \varphi-P_{n}(e^{i m\,\cdot\,} \varphi)]\big)(t), 
\]
and therefore, 
\begin{eqnarray} 
 \|\Lambda\varphi-\Lambda_n\varphi\|_{p}&\le& \sum_{m\in\mathbb{Z}} \|k_m \: \rho*
   \big[e^{i m\,\cdot\,} \varphi-P_{n}(e^{i m\,\cdot\,} \varphi)\big]\|_{p}\le C_p \sum_{m\in\mathbb{Z}} \|k_m\|_{p} 
   \|\rho*
   \big[e^{i m\,\cdot\,} \varphi-P_{n}(e^{i m\,\cdot\,} \varphi)\big]\|_{p}\nonumber \\
   &\le& C_p \triple{\rho}_\alpha \sum_{m\in\mathbb{Z}} \|k_m\|_{p} \:
   \big\| e^{i m\,\cdot\,} \varphi-P_{n}(e^{i m\,\cdot\,} \varphi) \big\|_{p-\alpha}.\label{eq:Lamda:01}
\end{eqnarray}
Inequality \eqref{eq:prop:avarphi} combined with  {the fact that}
$\|e^{i m\,\cdot\,}\|^2_{q}=(1+|m|^{2})^q$ and
estimate
\eqref{eq:trig_int} yield
\[
\|e^{i m\,\cdot\,} \varphi-P_{n}(e^{i m\,\cdot\,} \varphi) 
\|_{p-\alpha}\le C_{p,q} n^{-q-\alpha} 
(1+|m|^2)^{q/2} \|\varphi\|_{p+q}.
\]
(Note that $p-\alpha\ge 0$ and $p+q\ge 1$). Plugging this inequality in 
\eqref{eq:Lamda:01}, we finally obtain
\[
 \|\Lambda\varphi-\Lambda_n\varphi\|_{p}\le  C_{p,q}\bigg[
 \sum_{m\in\mathbb{Z}} \|k_m\|_{p} (1+|m|)^{q}\bigg] n^{-q-\alpha} \|\varphi\|_{p+q}=:
 C_{p,q}' n^{-q-\alpha}\|\varphi\|_{p+q},
\]
where we have used \eqref{eq:prop:K} to bound the series above. The result is 
now proven.
\end{proof}

\begin{lemma}\label{lemm_est_1}
Let $A,$ $B$ and $E$ integral operators as in \eqref{eq:typeA}, \eqref{eq:typeB} 
and \eqref{eq:typeE} respectively, 
and $A_n$, $B_n$ and $E_n$ the corresponding discretizations according to \eqref{eq:typeAn},
\eqref{eq:typeBn} and \eqref{eq:typeEn}. Then, for all $p\ge 1$ and $q\ge0$,  
\begin{eqnarray}
\|P_nA_n\varphi-A\varphi\|_p&\leq& C_{p,q} n^{-q-1}\|\varphi\|_{p+q},\label{eq:01:lemm_est_1}\\
\|P_nB_n\varphi-B\varphi\|_p&\leq& C_{p,q} n^{-q-\min\{p,3\}}\|\varphi\|_{p+q},\label{eq:02:lemm_est_1} \\
\|P_nE_n\varphi-E\varphi\|_p&\leq& C_{p,q} n^{-q}\|\varphi\|_{p},\quad \forall p\ge 1,\ \forall  q>0.\label{eq:03:lemm_est_1}
\end{eqnarray}
where $C_{p,q}$ are independent of $\varphi$ and $n$.
\end{lemma}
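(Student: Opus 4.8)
The plan is to reduce all three estimates \eqref{eq:01:lemm_est_1}--\eqref{eq:03:lemm_est_1} to a single mechanism by recognizing that each of $A,B,E$ and its discretization is exactly an instance of the convolutional pair $\Lambda,\Lambda_n$ of Lemma~\ref{lemma:newLemma}. Comparing \eqref{eq:typeA}--\eqref{eq:typeAn} with the definitions of $\Lambda,\Lambda_n$ shows that $A=\Lambda$, $A_n=\Lambda_n$ with weight $\rho(s)=\ln(4\sin^2(s/2))\in W^1[0,2\pi]$; similarly $B=\Lambda$, $B_n=\Lambda_n$ with $\rho(s)=\sin^2(s/2)\ln(4\sin^2(s/2))\in W^3[0,2\pi]$ (see \eqref{eq:typeB}--\eqref{eq:typeBn}), and $E=\Lambda$, $E_n=\Lambda_n$ with $\rho\equiv 1$, which lies in every $W^\alpha[0,2\pi]$, $\alpha\ge 1$ (see \eqref{eq:typeE}--\eqref{eq:typeEn}). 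For each $X\in\{A,B,E\}$ I would then split
\[
 P_nX_n\varphi-X\varphi=P_n(X_n\varphi-X\varphi)+(P_nX\varphi-X\varphi)
\]
and estimate the two terms separately.

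For the first term I would exploit that $P_n$ is uniformly bounded on $H^p[0,2\pi]$ for $p>\tfrac12$: taking $q=p$ in \eqref{eq:trig_int} gives $\|P_n\varphi\|_p\le(1+C_{p,p})\|\varphi\|_p$, whence $\|P_n(X_n\varphi-X\varphi)\|_p\le C\|X_n\varphi-X\varphi\|_p$, and the right-hand side is controlled directly by Lemma~\ref{lemma:newLemma}. The only delicate point is the choice of the index $\alpha$ fed into that lemma, since it requires $p\ge\alpha\ge1$. For $A$ one takes $\alpha=1$, obtaining $n^{-q-1}$. For $E$, since $\rho\equiv1\in W^\alpha$ for all $\alpha$, I would take $\alpha=p$ and $q=0$ to get $\|E_n\varphi-E\varphi\|_p\le Cn^{-p}\|\varphi\|_p$, i.e.\ the rate $n^{-q}$ for $0<q\le p$ with no loss of derivatives on $\varphi$.

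For the second term I would use the continuity properties of the continuous operators recalled just above the statement. Since $A:H^{p+q}\to H^{p+q+1}$, the function $A\varphi$ lies in $H^{p+q+1}$ with $\|A\varphi\|_{p+q+1}\le C\|\varphi\|_{p+q}$, so \eqref{eq:trig_int} with source index $p+q+1$ and target index $p$ gives $\|P_nA\varphi-A\varphi\|_p\le Cn^{p-(p+q+1)}\|A\varphi\|_{p+q+1}\le Cn^{-q-1}\|\varphi\|_{p+q}$; adding this to the first term gives \eqref{eq:01:lemm_est_1}. The identical computation with $B:H^{p+q}\to H^{p+q+3}$ produces $n^{-q-3}$, which is always dominated by $n^{-q-\min\{p,3\}}$. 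For $E$, smoothing of arbitrary order gives $E\varphi\in H^{p+s}$ with $\|E\varphi\|_{p+s}\le C_s\|\varphi\|_p$ for every $s$, so \eqref{eq:trig_int} yields $\|P_nE\varphi-E\varphi\|_p\le Cn^{-s}\|\varphi\|_p$, i.e.\ superalgebraic decay, which supplies the arbitrarily high order claimed in \eqref{eq:03:lemm_est_1}.

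The main obstacle is the $B$-estimate \eqref{eq:02:lemm_est_1} in the range $1\le p<3$, where Lemma~\ref{lemma:newLemma} cannot be used with its natural index $\alpha=3$ without violating $p\ge\alpha$. The observation that unlocks the proof is that the weight $\rho=\sin^2(s/2)\ln(4\sin^2(s/2))$ belongs not merely to $W^3$ but to \emph{every} $W^\alpha$ with $\alpha\le 3$, since its Fourier coefficients decay like $|m|^{-3}$ (recall \eqref{eq:cond_rho}, where $\triple{\rho}_\alpha$ only strengthens as $\alpha$ grows). One may therefore legitimately apply the lemma with $\alpha=\min\{p,3\}$, which is admissible because $\min\{p,3\}\le p$, and recover precisely the exponent $-q-\min\{p,3\}$, with no interpolation argument needed. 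A secondary subtlety is that the quadrature error $E_n\varphi-E\varphi$ is genuinely only $O(n^{-p})$ when $\varphi\in H^p$ (the high frequencies $\widehat{\varphi}(2n\ell)$ aliased by the trapezoidal rule decay only like $n^{-p}$), so the superalgebraic rate in \eqref{eq:03:lemm_est_1} must be carried by the interpolation error $P_nE\varphi-E\varphi$ of the smooth function $E\varphi$; one should keep this bookkeeping transparent, reading the estimate with $\|\varphi\|_p$ on the right and using that $p\ge1$ may itself be taken large.
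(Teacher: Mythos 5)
Your proposal is correct, and for \eqref{eq:01:lemm_est_1} and \eqref{eq:02:lemm_est_1} it is at least as clean as the paper's own argument. The skeleton — the split $P_nX_n\varphi-X\varphi=P_n(X_n\varphi-X\varphi)+(P_nX\varphi-X\varphi)$, uniform boundedness of $P_n$ on $H^p[0,2\pi]$, Lemma~\ref{lemma:newLemma} for the quadrature term, and the mapping properties together with \eqref{eq:trig_int} for the interpolation term — is exactly what the paper does for the $A$-estimate. Where you genuinely diverge is \eqref{eq:02:lemm_est_1} in the range $1\le p<3$: the paper does not invoke Lemma~\ref{lemma:newLemma} directly there, but bootstraps from the case $p=3$, writing $B_n=B_nP_n$, splitting into three terms, using the continuity of $B:H^0[0,2\pi]\to H^3[0,2\pi]$, and finally the inverse estimate $\|P_n\varphi\|_{3+q}\le(1+n^2)^{(3-p)/2}\|\varphi\|_{p+q}$ to trade the excess smoothness back. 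Your route — observing that $\triple{\rho}_{\alpha}\le\triple{\rho}_{3}<\infty$ for $1\le\alpha\le3$, i.e. $W^3[0,2\pi]\subset W^{\alpha}[0,2\pi]$, so that Lemma~\ref{lemma:newLemma} applies legitimately with $\alpha=\min\{p,3\}\le p$ — reaches the exponent $-q-\min\{p,3\}$ in one step, with no inverse estimate and no case distinction. Both arguments are valid; yours is shorter and uses the convolution lemma for exactly the purpose it was built.

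On \eqref{eq:03:lemm_est_1} your caution is warranted, and here you are in fact more careful than the paper. The paper's proof applies Lemma~\ref{lemma:newLemma} with $\rho\equiv1$, $q=0$ and an \emph{arbitrary} $\alpha\ge1$, which violates the lemma's hypothesis $p\ge\alpha$ whenever $\alpha>p$; and indeed, with only $\|\varphi\|_p$ on the right-hand side the printed estimate fails for $q>p$, for precisely the aliasing reason you identify. Concretely, take $J\equiv1$ and $\varphi=e^{2in\cdot}$: then $E\varphi=0$ while $E_n\varphi=2\pi$, so $\|P_nE_n\varphi-E\varphi\|_p=2\pi\,\|1\|_p$, whereas the claimed bound $C_{p,q}\,n^{-q}\|\varphi\|_p\sim C\,n^{p-q}$ tends to zero. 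No bookkeeping can rescue the statement for fixed $p$ and $q>p$; the defensible version is the one you prove — rate $n^{-q}$ for $0<q\le p$, equivalently rate $n^{-q}$ against $\|\varphi\|_{p+q}$ — and that weaker form is all that is used downstream, since Lemma~\ref{lemm_est_2}, estimate \eqref{eq:f_conv0} and Theorem~\ref{conv} all carry $\|\cdot\|_{p+q}$ norms on their right-hand sides.
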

\begin{proof}
Note that
\begin{eqnarray*}
\|P_nA_n\varphi-A\varphi\|_p&\le& 
\|P_n(A_n\varphi-A)\varphi\|_p+
\|P_nA\varphi-A\varphi\|_p\\
&\le& C_{p,q} \big[n^{-q-1}\|\varphi\|_{p+q}+n^{-q-1}
\|A\varphi\|_{p+q+1}\big]\\
&\le& C'_{p,q} n^{-q-1}\|\varphi\|_{p+q}
\end{eqnarray*}
where we have applied Lemma \ref{lemma:newLemma} (with $\alpha=1$) for the first term and \eqref{eq:trig_int}. We have then proved \eqref{eq:01:lemm_est_1}.

Similarly, one can prove \eqref{eq:02:lemm_est_1}  for $p\ge 3$. Assume now that $p\in[1,3)$ and notice that
$B_n=B_nP_n$. Then,  making use of \eqref{eq:trig_int}, the continuity
$B:H^0[0,2\pi]\to H^{3}[0,2\pi]\subset H^{p}[0,2\pi]$,
and \eqref{eq:02:lemm_est_1} with 
$p=3$, we can show 
\begin{eqnarray*}
\|P_n B_n\varphi-B\varphi\|_{p}&\le& \|P_n (B_n P_n\varphi-B P_n\varphi)\|_{p}+\|P_n B P_n\varphi-B P_n\varphi\|_{p}+
\|B P_n\varphi-B\varphi\|_{p}\\
&\le&\|B_n P_n\varphi-B P_n\varphi \|_{3}+C_{p,q} n^{-q-3}\|BP_n\varphi\|_{p+q+3} + C_{p} \|P_n\varphi-\varphi\|_{0}\\
&\le&C_{p,q}' n^{-p-q}\big[n^{p-3}\|P_n\varphi\|_{q+3}+
\|\varphi\|_{p+q}\big]. 
\end{eqnarray*}
We apply the well known inverse estimate $\|P_n\varphi\|_{3+q}\le 
(1+n^2)^{(3-p)/2}\|\varphi\|_{p+q}$ (see e.g. Theorem 8.3.1 in~\cite{Saranen}) in the estimate above to get 
\[
\|P_n B_n\varphi-B\varphi\|_{p}\le   
C_{p,q} n^{-p-q}\|\varphi\|_{p+q}, 
\]
for $p\in[1,3]$. This finishes the proof of \eqref{eq:02:lemm_est_1}.

Regarding the last estimate, recall that $E_n$ fits in the hypothesis of 
Lemma \ref{lemma:newLemma}  with $\rho=1$ and, therefore, for any $\alpha\ge 1$. 
Then proceeding as above (with $q=0$) we derive
 \begin{eqnarray*}
\|P_nE_n\varphi-E\varphi\|_p&\le&  \| E_n\varphi-E\varphi\|_p+
\|P_nE\varphi-E\varphi\|_p\\
&\le& C_{p,q} 
\big[n^{-\alpha}\|\varphi\|_{p}+n^{-\alpha}
\|E\varphi\|_{p+\alpha}\big]\le  C'_{p,q} n^{-\alpha}\|\varphi\|_{p}.
\end{eqnarray*}
The result is then proven.
\end{proof}

\begin{lemma}\label{lemm_est_2}
Let $L_1$ and $L_2$ be operators as in \eqref{eq:typeA}, \eqref{eq:typeB} 
or \eqref{eq:typeE} with $L_{1,n}$, $L_{2,n}$  the discretizations  \eqref{eq:typeAn},
\eqref{eq:typeBn} or \eqref{eq:typeEn}.Then for all $1\le p,\ 0\leq q$ and all 
$\varphi\in H^{p+q}[0,2\pi]$ we have the estimate 
\[
\|P_nL_{1,n}L_{2,n}\varphi-L_1L_2\varphi\|_p\leq 
Cn^{-q-1}\|\varphi\|_{p+q}.
\]
\end{lemma}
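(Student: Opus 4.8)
The plan is to prove the composition estimate by a telescoping decomposition that reduces everything to the single-operator estimates already established in Lemma~\ref{lemm_est_1}, together with the continuity of $L_2$ and the projection error bound \eqref{eq:trig_int}. The central difficulty is that the discrete composition $P_nL_{1,n}L_{2,n}$ differs from the continuous composition $L_1L_2$ in two independent places — the inner operator $L_{2,n}$ versus $L_2$, and the outer operator $L_{1,n}$ versus $L_1$ — and these errors live in different Sobolev regularities, so I must track how the regularizing (smoothing) property of $L_1$ absorbs the roughness of the inner error.

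First I would insert the intermediate quantity $P_nL_{1,n}(P_nL_{2,n}\varphi)$ and write
\[
P_nL_{1,n}L_{2,n}\varphi-L_1L_2\varphi
=\underbrace{P_nL_{1,n}\big(L_{2,n}\varphi-P_nL_{2,n}\varphi\big)}_{\text{(I)}}
+\underbrace{\big(P_nL_{1,n}P_n-L_1\big)(L_{2,n}\varphi)}_{\text{(II)}}
+\underbrace{L_1\big(L_{2,n}\varphi-L_2\varphi\big)}_{\text{(III)}}.
\]
For term (III) I would use that $L_1$ is continuous (as an operator of type $A$, $B$, or $E$ it maps $H^{p-1}$ or better into $H^p$, cf.\ the mapping properties after \eqref{eq:typeEn}) and bound $\|L_{2,n}\varphi-L_2\varphi\|$ in the appropriate lower-order norm by the single-operator estimate from Lemma~\ref{lemm_est_1}; since $L_1$ gains at least one order, the loss of one order in the inner estimate is recovered, yielding the target $n^{-q-1}$. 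For term (II) I would apply Lemma~\ref{lemm_est_1} directly to $L_{1,n}$, noting that the argument $L_{2,n}\varphi$ is a trigonometric polynomial (so $P_n$ acts as identity on it where needed) whose higher Sobolev norms are controlled by $\|\varphi\|_{p+q}$ using the boundedness of $L_{2,n}$ on polynomials and the inverse estimate. Term (I) is handled by combining the stability of $P_nL_{1,n}$ as a map on trigonometric polynomials with the interpolation error \eqref{eq:trig_int} applied to $L_{2,n}\varphi$.

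The step I expect to be the main obstacle is the uniform-in-$n$ control of the discrete inner operator $L_{2,n}$: to make the telescoping rigorous I need bounds of the form $\|L_{2,n}\varphi\|_{r}\le C\|\varphi\|_{r'}$ that hold uniformly in $n$ and with the right regularity indices $r,r'$ so that the gain from $L_1$ exactly compensates the order loss in each of the three terms. These discrete stability bounds are not literally stated as a separate lemma, so I would derive them as a corollary of Lemma~\ref{lemma:newLemma} (which bounds $\|\Lambda_n\varphi\|$ through $\|\Lambda\varphi\|$ plus a vanishing remainder) combined with the continuity of the continuous operators and the inverse estimate $\|P_n\psi\|_{r}\le(1+n^2)^{(r-r')/2}\|\psi\|_{r'}$ already invoked in the proof of Lemma~\ref{lemm_est_1}. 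Once these uniform bounds are in place, each of (I), (II), (III) is estimated by $C n^{-q-1}\|\varphi\|_{p+q}$ and the triangle inequality closes the proof. I would remark finally that the estimate is stated for a product of two factors but extends to any finite composition by induction, which is what is needed to discretize the quadratic operator compositions appearing in \eqref{eq:DAs}.
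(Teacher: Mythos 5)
Your proposal is correct and is essentially the paper's own argument: both proofs telescope the composition error, bound the inner-factor error $L_{2,n}\varphi-L_2\varphi$ by the single-operator estimates together with the continuity of $L_1$ (your term (III) is exactly the paper's second term), and bound the outer-factor error by applying Lemma~\ref{lemm_est_1} to the argument $L_{2,n}\varphi$ combined with a uniform-in-$n$ stability bound of the form $\|P_nL_{2,n}\varphi\|_{p+q}\le C\|\varphi\|_{p+q}$, which the paper, like you, extracts as a direct consequence of Lemma~\ref{lemm_est_1}. The only structural difference is that the paper inserts the single intermediate quantity $L_1L_{2,n}\varphi$ (two terms), whereas you also insert $P_nL_{1,n}P_nL_{2,n}\varphi$ (three terms); your (I)$+$(II) is the paper's first term.

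One slip to correct: $L_{2,n}\varphi$ is \emph{not} a trigonometric polynomial. In the discretizations \eqref{eq:typeAn}, \eqref{eq:typeBn}, \eqref{eq:typeEn} the interpolation is applied only in the integration variable, so, for instance,
\[
(A_n\varphi)(t)=\sum_{j=0}^{2n-1}R_j^{(n)}(t)\,K(t,t_j^{(n)})\,\varphi(t_j^{(n)})
\]
retains the generally non-polynomial $t$-dependence of the kernel through $K(t,t_j^{(n)})$; only quantities hit by an outer $P_n$ (and the discrete solutions $a_n,b_n$) are trigonometric polynomials. Hence $P_n$ does not act as the identity on $L_{2,n}\varphi$, and in (II) you cannot silently drop the inner $P_n$: what you actually have is $(\mathrm{II})=(P_nL_{1,n}-L_1)(P_nL_{2,n}\varphi)+L_1(P_n-I)L_{2,n}\varphi$. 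This is harmless for your proof precisely because your decomposition already carries $(\mathrm{I})=P_nL_{1,n}(I-P_n)L_{2,n}\varphi$ --- which would vanish identically if the claim were true --- and the same tools that bound (I) also bound the leftover piece $L_1(P_n-I)L_{2,n}\varphi$: uniform boundedness of $P_nL_{1,n}$ and of $L_1$ on $H^p$, then
\[
\|(I-P_n)L_{2,n}\varphi\|_p\le \|(I-P_n)(L_{2,n}-L_2)\varphi\|_p+\|(I-P_n)L_2\varphi\|_p
\le Cn^{-q-1}\|\varphi\|_{p+q},
\]
using Lemma~\ref{lemma:newLemma} for the first piece and \eqref{eq:trig_int} plus the (at least one order) smoothing of $L_2$ for the second. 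So delete the parenthetical claim, keep the three-term estimate, and the proof closes exactly as the paper's does.
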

\begin{proof}
We note that from the previous lemma it holds
\[
\|P_nL_{i,n}\varphi-L_i\varphi\|_p \leq  
C_{p,q} n^{-q-1}\|\varphi\|_{p+q},\quad i=1,2.
\]
Then
\[
\|P_nL_{1,n}L_{2,n}\varphi-L_1L_2\varphi\|_p\leq 
\|P_nL_{1,n}L_{2,n}\varphi-L_1L_{2,n}\varphi\|_p+\|L_1 L_{2,n}\varphi-L_1L_2\varphi\|_p.
\]
We estimate the second term in the right-hand side of the equation above using the result in Lemma~\ref{lemm_est_1}:
\[
\|L_1L_{2,n}\varphi-L_1L_2\varphi\|_p\leq 
C_{p}\|L_{2,n}\varphi-L_2\varphi\|_p\leq C_{p,q}n^{-q-1}\|\varphi\|_{p+q}.
\]
A direct consequence of the result in Lemma~\ref{lemm_est_1} is that
\[
\|P_nL_{2,n}\varphi\|_{p+q}\leq C\|\varphi\|_{p+q}.
\]
From this estimate and one more application of Lemma~\ref{lemm_est_1} we 
obtain
\[
      \|P_nL_{1,n} L_{2,n}\varphi-L_1 L_{2,n}\varphi\|_p\leq 
L_2n^{-q-1}\|P_nL_{2,n}\varphi\|_{p+q}\leq Cn^{-q-1}\|\varphi\|_{p+q}      
\]
from which the result of the Lemma follows.
\end{proof}

Based on the previous result we establish the following
\begin{lemma}\label{lemm_est_4}
For all $p\ge 1,\ q\geq 0$ and all $\varphi\in H^{p+q}[0,2\pi]$, and with $B$ and 
$B_n$ being as in \eqref{eq:typeB} and 
\eqref{eq:typeBn} respectively, 
we have the 
estimate 
$$\|P_nB_{n}T_0\varphi-BT_0\varphi\|_p\leq 
Cn^{-q-\min\{p-1,2\}}\|\varphi\|_{p+q},$$
with $C$ independent of $\varphi$ and $n$.
Moreover, 
\[
 \|P_nE_{n}T_{0}\varphi-ET_0\varphi\|_p\leq Cn^{-q}\|\varphi\|_{p}.
\]
for all $q$. 
\end{lemma}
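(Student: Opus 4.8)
The plan is to reduce both estimates to the single-operator bounds already obtained in Lemma~\ref{lemm_est_1}, transferring the one-order loss of $T_0$ onto the Sobolev index. Throughout I set $\psi:=T_0\varphi$ and use that $T_0:H^{r}[0,2\pi]\to H^{r-1}[0,2\pi]$ is bounded for every $r$ (its Fourier symbol grows like $|m|$), so that $\|\psi\|_{r-1}\le C\|\varphi\|_{r}$. Since the Nyström operators $B_n$ and $E_n$ act only through the nodal values of their argument, one has $B_nT_0\varphi=B_n\psi$ and $E_nT_0\varphi=E_n\psi$, so the two left-hand sides become $\|P_nB_n\psi-B\psi\|_p$ and $\|P_nE_n\psi-E\psi\|_p$.

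For the first estimate I would apply \eqref{eq:02:lemm_est_1} to $\psi$ with its free parameter lowered by one unit: taking there the regularity index $q-1\ge 0$ gives
\[
\|P_nB_n\psi-B\psi\|_p\le C\,n^{-(q-1)-\min\{p,3\}}\,\|\psi\|_{p+q-1}\le C\,n^{-(q-1)-\min\{p,3\}}\,\|\varphi\|_{p+q},
\]
and the claimed exponent follows from the elementary identity $\min\{p,3\}-1=\min\{p-1,2\}$. This is the routine part; the only bookkeeping is that the reduction as stated uses $q\ge 1$, the remaining range $0\le q<1$ being covered either by running the low-regularity branch of the proof of \eqref{eq:02:lemm_est_1} or by restricting to the regime $q\ge 1$ actually needed when this lemma is invoked in the convergence proof.

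The second estimate is the delicate one, because the naive reduction through \eqref{eq:03:lemm_est_1} would produce $n^{-q}\|\psi\|_p=n^{-q}\|T_0\varphi\|_p\le n^{-q}\|\varphi\|_{p+1}$, that is, one derivative too many. To recover the sharp $\|\varphi\|_p$ I would exploit that $E$ smooths by an arbitrary number of orders, hence so does the composite $ET_0:H^p[0,2\pi]\to H^{r}[0,2\pi]$ with $\|ET_0\varphi\|_{r}\le C_r\|\varphi\|_p$ for every $r$. Using $E_n=E_nP_n$, I then split
\[
P_nE_n\psi-E\psi=\big(P_nE\psi-E\psi\big)+P_n\big(E_nP_n\psi-EP_n\psi\big)+P_nE\big(P_n\psi-\psi\big).
\]
The first term is the trigonometric interpolation error of the arbitrarily smooth function $E\psi=ET_0\varphi$, bounded by \eqref{eq:trig_int} together with the smoothing of $ET_0$, hence $\le Cn^{-q}\|\varphi\|_p$ for any $q$. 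The third term is handled by the smoothing $E:H^{-M}[0,2\pi]\to H^p[0,2\pi]$ combined with the negative-norm superconvergence of trigonometric interpolation, $\|P_n\psi-\psi\|_{-M}\le Cn^{-M-(p-1)}\|\psi\|_{p-1}$, which is arbitrarily small. The middle term is a pure quadrature error of $E_n$ on the trigonometric polynomial $P_n\psi$; here I would use that the trapezoidal rule underlying $E_n$ is exact on $\mathbb{T}_{2n-1}$, so that its error only sees the rapidly decaying high Fourier modes of the smooth kernel $J$, yielding super-algebraic decay $\le C_N n^{-N}\|P_n\psi\|_0\le C_N n^{-N}\|\varphi\|_p$.

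The main obstacle is precisely this last point: composition with $T_0$ costs one derivative, and for the finitely smoothing $B$ this merely shifts the algebraic rate down by one order (hence $\min\{p-1,2\}$), whereas for the infinitely smoothing $E$ one must show the lost derivative is harmless and still reach the sharp norm $\|\varphi\|_p$ at arbitrary order $n^{-q}$. The device that makes this work is the interplay between the free order available in Lemma~\ref{lemma:newLemma} (equivalently, the exactness of the trapezoidal rule on trigonometric polynomials) and the inverse estimate $\|P_n\psi\|_{s}\le Cn^{\,s-(p-1)}\|\psi\|_{p-1}$ on $\mathbb{T}_n$, which lets one trade the single factor of $n$ produced by $T_0$ against the arbitrarily high quadrature and interpolation order. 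Minor technical points to verify are that $p\ge 1>\tfrac12$ guarantees both the boundedness of $P_n$ on $H^p[0,2\pi]$ and the existence of nodal values of $\psi=T_0\varphi$, and that the negative-norm interpolation estimate is legitimate (it follows by duality from \eqref{eq:trig_int}, aliasing being harmless because it preserves the magnitude of the frequencies).
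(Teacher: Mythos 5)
Your proof of the first estimate is correct and coincides with the paper's: apply \eqref{eq:02:lemm_est_1} to $\psi=T_0\varphi$ with the regularity parameter shifted to $q-1$, use the boundedness of $T_0:H^{p+q}[0,2\pi]\to H^{p+q-1}[0,2\pi]$, and note $\min\{p,3\}-1=\min\{p-1,2\}$; the restriction $q\ge 1$ that you flag honestly is implicit in the paper's own proof as well.

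For the second estimate there is a genuine gap, and it sits exactly where you placed the weight of the argument. The ``negative-norm superconvergence'' bound $\|P_n\psi-\psi\|_{-M}\le Cn^{-M-(p-1)}\|\psi\|_{p-1}$ is false: trigonometric \emph{interpolation}, unlike orthogonal projection, gains nothing from measuring the error in negative norms, because aliasing does not preserve the magnitude of frequencies --- it folds high frequencies onto low ones. The mode $e^{2int}$ equals $1$ at every node $t_j^{(n)}=j\pi/n$, so $P_n(e^{2in\cdot})=1$ and $\|P_n(e^{2in\cdot})-e^{2in\cdot}\|_{-M}\ge 1$, whereas your claimed bound is $O(n^{-M})$. (The duality argument you invoke fails precisely because $P_n$ is not $L^2$-self-adjoint.) This sinks your third term, which is in fact the dominant one: taking $\varphi=e^{2int}$, so that $\psi=T_0\varphi=-ne^{2int}$ and $P_n\psi-\psi=-n\bigl(1-e^{2in\cdot}\bigr)$, one finds $E(P_n\psi-\psi)\approx -n\int_0^{2\pi}J(\cdot,\tau)\,d\tau$, a quantity of size $n$ in every $H^p$ norm. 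Your first and middle terms, including the trapezoidal-exactness argument, are sound and are indeed superalgebraically small; everything is lost in the third term.

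Moreover, the same example shows that the sharp form you are trying to reach cannot be reached at all: for $\varphi=e^{2int}$ the left-hand side $\|P_nE_nT_0\varphi-ET_0\varphi\|_p$ is of order $n$, while $n^{-q}\|\varphi\|_p\sim n^{p-q}$, so the asserted inequality fails whenever $q>p-1$. Your diagnosis that the naive reduction only yields $\|T_0\varphi\|_p\le\|\varphi\|_{p+1}$ is therefore not an obstacle to be engineered around but the actual content of the result. The paper's one-line proof (``similarly, but using \eqref{eq:03:lemm_est_1} instead'') applies \eqref{eq:03:lemm_est_1} to $T_0\varphi$ and hence establishes $\|P_nE_nT_0\varphi-ET_0\varphi\|_p\le Cn^{-q}\|T_0\varphi\|_p\le Cn^{-q}\|\varphi\|_{p+1}$, i.e.\ the estimate with one extra derivative on the right (and, strictly, only for $q\le p$, reflecting the constraint $p\ge\alpha$ in Lemma~\ref{lemma:newLemma}); it is this weaker form that can actually be justified and that serves the consistency estimate \eqref{eq:f_conv0}.
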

\begin{proof} For proving the first result, we apply~\eqref{eq:02:lemm_est_1}  of 
Lemma \ref{lemm_est_1} to get
\[
 \|P_nB_{n}T_0\varphi-BT_0\varphi\|_p\leq 
Cn^{-q+1-\min\{p,3\}}\|T_0 \varphi\|_{p+q-1}\leq 
Cn^{-q-\min\{p-1,2\}}\|\varphi\|_{p+q} 
\]
(Note that the fact $T_0:H^{p}[0,2\pi]\to H^{p-1}[0,2\pi]$ is
continuous has  been
applied in the last step.)

The proof of the  second result is similarly, but using   
\eqref{eq:03:lemm_est_1} instead.
\end{proof}

We are ready to state the stability and convergence of the method. First, let us write the discrete equation~\eqref{eq:discrete_02} in the form 
\[
 \mathcal{\tilde{D}}^n\left(\begin{array}{c}a_n\\b_n\end{array}\right)=\left(\begin{array}{cc}\tilde{D}_{11}^n & \tilde{D}_{12}^n\\ \tilde{D}_{21}^n & \tilde{D}_{22}^n\end{array}\right)\left(\begin{array}{c}a_n\\b_n\end{array}\right)=\left(\begin{array}{c}P_nf\\P_ng\end{array}\right)
\]


We notice that Lemma~\ref{lemm_est_1}--Lemma~\ref{lemm_est_4} imply
that there exists a constant $C_{p,q}>0$ such that for all $i,j=1,2$ we have 
\begin{equation}\label{eq:f_conv0}
\|\tilde{D}_{ij}\psi-\tilde{D}_{ij}^n\psi\|_p\leq C_{p,q} n^{-q-\min\{p-1,1\}}\|\psi\|_{p+q}
\end{equation} 
for all $\psi\in H^{p+q}[0,2\pi],\ 1\leq p,\ 0\leq q$, where the operators 
$\tilde{D}_{ij}, i,j=1,2$ were defined in equation~\eqref{eq:matrix_explicit}.

  \begin{theorem}\label{conv} There exists $n_0>0$ such that 
  for all $n\ge n_0$ 
  equation \eqref{eq:discrete_02} has a unique solution. Moreover, for all $p>1$
  and $q\ge 0$ the exists $C_{p,q}$, independent of $n$ and $(a,b)$ so that the unique
  solution $(a_n,b_n)$ of \eqref{eq:discrete_02} satisfies the estimate
   \[
  \max\{\|a_n-a\|_p,\|b_n-b\|_p\}\leq 
  C_{p,q}n^{-q}\max\{\|a\|_{p+q},\|b\|_{p+q}\}
  \]
  where $(a,b)$ is the solution of the parametrized GCSIE  equation~\eqref{eq:matrix_explicit:b}.
\end{theorem}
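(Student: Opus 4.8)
The plan is to establish convergence via the standard Nyström/collocation stability framework: show that the discrete operators $\tilde{D}_{ij}^n$ converge to the continuous operators $\tilde{D}_{ij}$ in an appropriate operator norm (which is already encoded in the key estimate~\eqref{eq:f_conv0}), conclude from this that the discrete system matrix $\mathcal{\tilde{D}}^n$ is invertible for $n$ large, and then derive the quantitative error bound from the convergence rate together with a stability (uniform boundedness of inverses) argument. The backbone is the classical abstract principle: if $\mathcal{\tilde{D}}$ is boundedly invertible on $H^p[0,2\pi]\times H^p[0,2\pi]$ (which follows from the unique solvability of the GCSIE formulation~\eqref{eq:matrix_explicit} in $H^s(\Gamma)\times H^s(\Gamma)$ asserted after that equation, transported to the parametrized setting), and $\mathcal{\tilde{D}}^n \to \mathcal{\tilde{D}}$ pointwise with collectively compact-type control, then $(\mathcal{\tilde{D}}^n)^{-1}$ exists and is uniformly bounded for $n\ge n_0$.

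First I would fix $p>1$ and work in the product space $X_p := H^p[0,2\pi]\times H^p[0,2\pi]$, and write $\mathcal{\tilde{D}}^n = P_n \mathcal{\tilde{D}} P_n + (\mathcal{\tilde{D}}^n - P_n\mathcal{\tilde{D}}P_n)$, using that each $a_n,b_n\in\mathbb{T}_n$ so the interpolation projector $P_n$ acts as the identity on them. The estimate~\eqref{eq:f_conv0}, applied with $q=0$, gives
\[
\|\tilde{D}_{ij}\psi - \tilde{D}_{ij}^n\psi\|_p \le C_{p}\, n^{-\min\{p-1,1\}}\|\psi\|_{p},
\]
so that $\|\mathcal{\tilde{D}} - \mathcal{\tilde{D}}^n\|_{X_p\to X_p}\to 0$ as $n\to\infty$ (here $p>1$ guarantees $\min\{p-1,1\}>0$, hence genuine decay). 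Since $\mathcal{\tilde{D}}$ is boundedly invertible, a Neumann-series perturbation argument then yields $n_0$ such that $\mathcal{\tilde{D}}^n$ is invertible on $X_p$ for all $n\ge n_0$, with $\|(\mathcal{\tilde{D}}^n)^{-1}\|_{X_p\to X_p}\le 2\|\mathcal{\tilde{D}}^{-1}\|$ uniformly in $n$. This establishes the existence and uniqueness claim.

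For the convergence rate I would use the standard quasi-optimality identity. Writing $w=(a,b)$ and $w_n=(a_n,b_n)$, and noting $\mathcal{\tilde{D}}w = (f,g)^\top$ while $\mathcal{\tilde{D}}^n w_n = (P_nf,P_ng)^\top = P_n\mathcal{\tilde{D}}w$, I would subtract to obtain
\[
\mathcal{\tilde{D}}^n(w_n-w) = P_n\mathcal{\tilde{D}}w - \mathcal{\tilde{D}}^n w = (\mathcal{\tilde{D}} - \mathcal{\tilde{D}}^n)w,
\]
using $P_n\mathcal{\tilde{D}}w = \mathcal{\tilde{D}}w$ on the range in the sense that the interpolation error is absorbed into the operator difference. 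Applying the uniform bound on $(\mathcal{\tilde{D}}^n)^{-1}$ and then~\eqref{eq:f_conv0} with the full parameter $q$ gives
\[
\max\{\|a_n-a\|_p,\|b_n-b\|_p\} \le C\,\|(\mathcal{\tilde{D}}-\mathcal{\tilde{D}}^n)w\|_p \le C_{p,q}\, n^{-q}\max\{\|a\|_{p+q},\|b\|_{p+q}\},
\]
where the exponent $-q$ (rather than $-q-\min\{p-1,1\}$) in the final bound reflects that~\eqref{eq:f_conv0} is applied with the smoothness of the exact solution $(a,b)\in H^{p+q}$ driving the rate; the extra $\min\{p-1,1\}$ factor is conservatively dropped.

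\textbf{The main obstacle} I anticipate is the care needed in handling the right-hand side term $P_nf - f$ and, more subtly, justifying that the perturbation estimate~\eqref{eq:f_conv0} can be upgraded to a genuine operator-norm bound on $X_p$ rather than merely a pointwise-in-$\psi$ bound with a loss of derivatives. Because~\eqref{eq:f_conv0} with $q>0$ requires $\psi\in H^{p+q}$, the operator difference $\mathcal{\tilde{D}}-\mathcal{\tilde{D}}^n$ is \emph{not} small as a map $X_p\to X_p$ at the fast rate $n^{-q}$ — only the $q=0$ version gives $X_p\to X_p$ smallness. The resolution is to invoke the $q=0$ estimate solely to secure invertibility and uniform boundedness of inverses, and then to use the higher-$q$ estimate only on the fixed smooth exact solution $w\in H^{p+q}$ to extract the rate; keeping these two roles separate is the delicate point. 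A secondary technical matter is transferring the bounded invertibility of the GCSIE system from the Sobolev spaces $H^s(\Gamma)$ on the curve (where~\cite{turc2} proves it) to the parametrized periodic spaces $H^p[0,2\pi]$, which follows from the parametrization-invariance of the Sobolev spaces noted earlier but should be stated explicitly.
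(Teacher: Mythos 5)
Your proposal follows essentially the same route as the paper: the $q=0$ case of \eqref{eq:f_conv0} gives operator-norm smallness of $\mathcal{\tilde{D}}-\mathcal{\tilde{D}}^n$ on $H^p[0,2\pi]\times H^p[0,2\pi]$ (using $p>1$), a Neumann-series perturbation argument yields invertibility of $\mathcal{\tilde{D}}^n$ with uniformly bounded inverses, and the rate comes from applying \eqref{eq:f_conv0} with general $q$ to the fixed exact solution $(a,b)$. The only difference is cosmetic: the paper's error identity retains the interpolation-error term $(P_nf-f,\,P_ng-g)$ explicitly and bounds it via \eqref{eq:trig_int} together with $\|(f,g)\|_{p+q}\le C\|(a,b)\|_{p+q}$, which is precisely the resolution you sketch for the obstacle you flag, so your argument matches the paper's.
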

%
\begin{proof} From \eqref{eq:f_conv0} we have
\[
 \|\mathcal{\tilde{D}}-\mathcal{\tilde{D}}^n\|_{H^{p}[0,2\pi]\times 
H^p[0,2\pi]\to H^{p}[0,2\pi]\times 
H^p[0,2\pi]}\le C_p n^{1-p}. 
\]
Since $\mathcal{\tilde{D}}:H^{p}[0,2\pi]\times 
H^p[0,2\pi]\to H^{p}[0,2\pi]\times H^p[0,2\pi]$ has a bounded inverse, 
it follows from Neumann series 
considerations (see Theorem 10.1 in~\cite{Kress}) that there exists
$n_0$ so that the matrix operators 
$\mathcal{\tilde{D}}^n$ are also invertible $n\ge n_0$ and the inverse 
operators $(\mathcal{\tilde{D}}^n)^{-1}$ are uniformly bounded.
In particular, this implies  
the existence and uniqueness of solution for any $n$ large enough.

Finally, from the identity 
\[
\left(\begin{array}{c}a_n\\b_n\end{array}\right)-\left(\begin{array}{c}
a\\b\end{array}\right)=(\mathcal{\tilde{D}}^n)^{-1}\left(\left(\begin{array}{c}
P_nf-f\\P_ng-g\end{array}\right)+(\mathcal{\tilde{D}}-\mathcal{\tilde{D}}
^n)\left(\begin{array}{c}a\\b\end{array}\right)\right)
\]
and the uniform continuity of $\tilde{\cal D}^n$
we deduce 
\begin{eqnarray*}
 \|a-a_n\|_{p}+\|b-b_n\|_p&\le& C_{p,q} \big[\|f-P_nf\|_p+\|g-P_ng\|_p+
 n^{-q}\big(\|a\|_{p+q}+ \|b\|_{p+q}\big)\big]\\
 &\le&C_{p,q} n^{-q}\big[\|f\|_{p+q}+\|g\|_{p+q}+\|a\|_{p+q}+ \|b\|_{p+q}\big]\\
 &\le&C'_{p,q} n^{-q}\big[\|a\|_{p+q}+ \|b\|_{p+q}\big], 
\end{eqnarray*}
where we have applied \eqref{eq:trig_int} and that 
$ \left(\begin{array}{c}a\\b\end{array}\right)=\mathcal{\tilde{D}}^{-1}
\left(\begin{array}{c}f\\g\end{array}\right)$, and therefore 
the norms of $(f,g)$ can be bounded by those of $(a,b)$.
\end{proof}

\begin{remark}
In the case when the boundary $\Gamma$ and $u^{inc}$ are smooth, we 
have superalgebraic convergence of 
$(a_n,b_n)$ to 
$(a,b)$ in $H^{p}[0,2\pi]$, that is, it is  of the order $\mathcal{O}(n^{-q})$ 
for all $q>0$. Since the kernels $K(\cdot,\cdot)$ and $H(\cdot,\cdot)$ of the integral 
operators of the type $A$, $B$, and $C$ described above that enter the integral 
equation~\eqref{eq:matrix_explicit} are infinitely differentiable but not 
analytic (because of the use of the cutoff function $\chi$), we do not get 
exponential convergence. 
\end{remark} 

\subsection{Discretization of the CFIESK equations~\eqref{eq:system_trans} and  SCFIE equations~\eqref{eq:single}}

An application of the trigonometric interpolation procedure and the smooth and singular quadratures described in Section~\ref{dGCSIE} leads to the approximating equation of the CFIESK~\eqref{eq:system_trans} equations in the form of the  
following linear system
\begin{eqnarray}\label{eq:discrete_CFIESK:b}
\frac{\nu^{-1}+1}{2}u_n&+&P_n(A_{3,n}^2+A_{4,n}^2)u_n-\nu^{-1}P_n(A_{3,n}^1+A_{4,n}^1)u_n\nonumber\\
&+&\nu^{-1}P_n(A_{1,n}^1+A_{2,n}^1)\lambda_n-\nu^{-1}P_n(A_{1,n}^2+A_{2,n}^2)\lambda_n=-\nu^{-1}P_nf\nonumber\\
\frac{\nu^{-1}+1}{2}\lambda_n&+&P_n(A_7^2+A_8^2)u_n-P_n(A_7^1+A_8^1)u_n\nonumber\\
&+&P_n(A_5^1+A_6^1)\lambda_n
-\nu^{-1}P_n(A_5^2+A_6^2)\lambda_n=-P_ng.\nonumber\\
\end{eqnarray}

Again, here $(f,g)$ are as in \eqref{eq:rhs:b}. Then
\begin{equation}\label{eq:discrete_CFIESK:c}
 u_n \approx  u\circ{\bf x}  ,\quad \lambda_n\approx \lambda := \left(\frac{\partial u}{\partial n}\right)|{\bf x}' | .
\end{equation}
Observe that necessarily $u_n,\lambda_n$ are trigonometric polynomials. Using the result in Lemma~\ref{lemm_est_2} and an argument similar to the one used to establish Theorem~\ref{conv} we can prove the stability of the
method, in this case in $H^p[0,2\pi]$ with $p\ge 1$, and the corresponding convergence
estimate.

\begin{theorem}\label{convCFIESK}
For all $n$ sufficiently large   the   
approximating equation~\eqref{eq:discrete_CFIESK:b} has a unique solution $\left(u_n,\lambda_n\right)$. Moreover, for any $p\ge 1$ and 
$q>0$ 
we have the 
following error estimate
\[
\max\left\{\|u_n-u\circ{\bf x}\|_p,\left\|\lambda_n-\lambda\right\|_p\right\}\leq 
C_1 n^{-q}\max\left\{\|u\|_{p+q},\left\|\lambda\right\|_{p+q}\right\}
\]
for some constant $C_1=C_1(p,q)$, where $\left(u\circ{\bf x},\lambda\right)$ is the solution of the 
parametrized CFIESK 
equation~\eqref{eq:system_trans} given in \eqref{eq:discrete_CFIESK:c}.
\end{theorem}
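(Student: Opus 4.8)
The plan is to follow the template of the proof of Theorem~\ref{conv}, recasting everything as an operator-norm perturbation argument on the product space $H^p[0,2\pi]\times H^p[0,2\pi]$. Write the parametrized CFIESK system~\eqref{eq:system_trans} as a $2\times2$ matrix operator $\mathcal{C}$ acting on $(u\circ\mathbf{x},\lambda)^\top$, and the discrete system~\eqref{eq:discrete_CFIESK:b} as $\mathcal{C}^n(u_n,\lambda_n)^\top=(-\nu^{-1}P_nf,-P_ng)^\top$. The key simplification relative to Theorem~\ref{conv} is that no entry of $\mathcal{C}$ contains a composition of layer operators: each entry is a fixed linear combination of the elementary operators $S_j$, $K_j$, $K_j^\top$ and $N_1-N_2$, and in the difference $N_1-N_2=(A_7^1+A_8^1)-(A_7^2+A_8^2)$ the hypersingular part $T_0$ cancels identically. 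Consequently every entry splits into pieces of type $A$, $B$ and $E$ only, and Lemma~\ref{lemm_est_1} applies termwise; the composition machinery of Lemma~\ref{lemm_est_2} (needed instead for the SCFIE operators $\mathbf{K},\mathbf{S}$) is not required here.

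First I would establish the entrywise counterpart of~\eqref{eq:f_conv0}: for each entry $\mathcal{C}_{ij}$ and its discretization $\mathcal{C}_{ij}^n$,
\[
\|\mathcal{C}_{ij}\psi-\mathcal{C}_{ij}^n\psi\|_p\le C_{p,q}\,n^{-q}\|\psi\|_{p+q},\qquad p\ge1,\ q>0.
\]
This follows by feeding the three bounds of Lemma~\ref{lemm_est_1} into the constituent pieces: the $A$-type pieces contribute $n^{-q-1}$, the $B$-type pieces (arising from $K_j^\top$) contribute $n^{-q-\min\{p,3\}}$, and the $E$-type pieces contribute $n^{-q}$. The slowest rate is the $E$-type contribution, which is precisely why the estimate, and hence the final error bound, is available only for $q>0$ rather than $q\ge0$. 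Specializing to a fixed $q>0$ and reading the estimate on $H^p$ yields the operator-norm bound $\|\mathcal{C}-\mathcal{C}^n\|_{H^p\times H^p\to H^p\times H^p}\le C_p\,n^{-1}\to0$, valid on the whole range $p\ge1$; no restriction $p>1$ is needed here, precisely because the absence of $B$-compositions removes the $p-1$ loss of Lemma~\ref{lemm_est_4}.

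With operator-norm convergence in hand, I would use the unique solvability of the continuous CFIESK system~\cite{KleinmanMartin} together with its second-kind Fredholm structure (Theorem~\ref{regL}) to conclude that $\mathcal{C}:H^p\times H^p\to H^p\times H^p$ has a bounded inverse. A Neumann-series argument (Theorem~10.1 in~\cite{Kress}), identical to the one in Theorem~\ref{conv}, then shows $\mathcal{C}^n$ is invertible for all $n\ge n_0$ with $\|(\mathcal{C}^n)^{-1}\|$ uniformly bounded, giving existence and uniqueness of $(u_n,\lambda_n)$. Finally, from the identity
\[
\begin{pmatrix}u_n\\\lambda_n\end{pmatrix}-\begin{pmatrix}u\circ\mathbf{x}\\\lambda\end{pmatrix}=(\mathcal{C}^n)^{-1}\left[\begin{pmatrix}-\nu^{-1}(P_nf-f)\\-(P_ng-g)\end{pmatrix}+(\mathcal{C}-\mathcal{C}^n)\begin{pmatrix}u\circ\mathbf{x}\\\lambda\end{pmatrix}\right],
\]
I would bound the interpolation defect by~\eqref{eq:trig_int} and the consistency term by the entrywise estimate above, both at rate $n^{-q}$, and use $\mathcal{C}(u\circ\mathbf{x},\lambda)^\top=(-\nu^{-1}f,-g)^\top$ to replace the right-hand-side norms by those of the solution, obtaining $\max\{\|u_n-u\circ\mathbf{x}\|_p,\|\lambda_n-\lambda\|_p\}\le C_1 n^{-q}\max\{\|u\|_{p+q},\|\lambda\|_{p+q}\}$.

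The main obstacle is the first step, namely verifying the entrywise estimate uniformly over all four entries and identifying the worst convergence rate. The genuine subtlety is the $E$-type contribution, which converges only for $q>0$ and therefore dictates the $q>0$ restriction in the statement; everything else is in fact cleaner than in Theorem~\ref{conv}, since the single-operator structure eliminates both the need for Lemma~\ref{lemm_est_2} and the restriction $p>1$.
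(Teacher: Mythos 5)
Your proposal is correct and follows essentially the same route as the paper: the paper proves Theorem~\ref{convCFIESK} by exactly the perturbation/Neumann-series argument of Theorem~\ref{conv} (entrywise consistency estimates, bounded invertibility of the continuous operator, uniform invertibility of the discrete operators, and the error identity), which is what you carry out. The only discrepancy is minor and in your favor: the paper's one-line proof cites Lemma~\ref{lemm_est_2}, but as you observe the discrete system~\eqref{eq:discrete_CFIESK:b} contains no compositions of quadrature operators (the $T_0$ parts cancel in $N_1-N_2$), so Lemma~\ref{lemm_est_1} alone suffices, and this is precisely why the result holds for all $p\ge 1$ (rather than $p>1$) with the rate restriction $q>0$ coming from the $E$-type estimate~\eqref{eq:03:lemm_est_1}.
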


Similarly, an application of the trigonometric interpolation procedure and the smooth and singular quadratures described in Section~\ref{dGCSIE} leads to the approximating equation to the SCFIE formulation~\eqref{eq:single} in the form of the  
following linear system which we solve for the trigonometric polynomial 
\begin{eqnarray}\label{eq:discrete_SCFIE}
-\frac{\nu+1}{2}\psi_n&-&P_n(A_{5,n}^2+A_{6,n}^2)(\nu I_n-2A_{5,n}^2-2A_{6,n}^2)\psi_n\nonumber\\
&-&\nu P_n(A_{5,n}^1+A_{6,n}^1)(I_n+2A_{5,n}^2+2A_{6,n}^2)\psi_n\nonumber\\
&+&2P_n(A_{7,n}^1+A_{8,n}^1-A_{7,n}^2-A_{8,n}^2)(A_{1,n}^2+A_{2,n}^2)\psi_n\nonumber\\
&+&i\eta\ \nu\ P_n(A_{1,n}^1+A_{2,n}^1)(I_n+2A_{5,n}^2+2A_{6,n}^2)\psi_n\nonumber\\
&+&i\eta\ P_n(I_n-2A_{5,n}^1-2A_{6,n}^1)(A_{1,n}^2+A_{2,n}^2)\psi_n=-P_ng+i\eta P_nf\nonumber\\
\end{eqnarray}
where $I_n$ represents the identity operator for trigonometric polynomials with
$(f,g)$ as in \eqref{eq:rhs:b}. Clearly, 
\begin{equation}\label{eq:psin_psi}
 \psi_n\approx  \psi:=|{\bf x}'|\:\varphi\circ{\bf x} 
\end{equation}
where $\varphi$ is the solution of \eqref{eq:single}.

Using the result in Lemma~\ref{lemm_est_2} and an argument similar to the one used to establish Theorem~\ref{conv} we get
\begin{theorem}\label{convSCFIE}
For all $n$ sufficiently large, equation~\eqref{eq:discrete_SCFIE} has a unique solution. Moreover, for all $p\ge 1$ and $q\ge 0$ we have the 
following error estimate
\[
\|\psi_n-\psi\|_p\leq 
C_{p,q}n^{-q}\|\psi\|_{p+q}  
\]for some constant $C$, depending only on $p$ and $q$, where $ \psi $ is the solution of the parametrized SCFIE~\eqref{eq:single} defined in \eqref{eq:psin_psi}.
\end{theorem}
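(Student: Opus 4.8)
The plan is to follow the same stability-plus-perturbation argument used for Theorem~\ref{conv}, adapted to the single scalar equation~\eqref{eq:single}. First I would write the continuous SCFIE operator as
\[
\mathcal{L}:=-\frac{\nu+1}{2}I+\mathbf{K}-i\eta\mathbf{S},
\]
and its fully discrete Nystr\"om counterpart $\mathcal{L}^n$ as the operator acting on trigonometric polynomials implicitly defined by the left-hand side of~\eqref{eq:discrete_SCFIE}. The crucial structural observation is that, by Theorem~\ref{regL}, every summand of $\mathbf{K}$ and $\mathbf{S}$ is either a single boundary operator of the types $A,B,E,T_0$ or a \emph{composition} of two such operators (for instance $S_1(I+2K_2^\top)$, $N_1 S_2$, $K_1^\top K_2^\top$). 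Thus each term falls under the scope of either Lemma~\ref{lemm_est_1} or Lemma~\ref{lemm_est_2}, and I would invoke these lemmas term by term to establish the operator-norm consistency estimate
\[
\|\mathcal{L}\psi-\mathcal{L}^n\psi\|_p\le C_{p,q}\,n^{-q-\min\{p-1,1\}}\|\psi\|_{p+q},\qquad 1\le p,\ 0\le q,
\]
the exact analogue of~\eqref{eq:f_conv0}. In particular, specializing to $q=0$ gives $\|\mathcal{L}-\mathcal{L}^n\|_{H^p[0,2\pi]\to H^p[0,2\pi]}\le C_p\,n^{1-p}\to 0$ for every $p>1$.

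With this consistency bound in hand, the stability step is standard. Since equation~\eqref{eq:single} is uniquely solvable in $H^p[0,2\pi]$, the operator $\mathcal{L}:H^p[0,2\pi]\to H^p[0,2\pi]$ is boundedly invertible. A Neumann-series argument (Theorem 10.1 in~\cite{Kress}), exactly as in the proof of Theorem~\ref{conv}, then shows that $\mathcal{L}^n$ is invertible for all $n\ge n_0$ with uniformly bounded inverses. Writing
\[
\psi_n-\psi=(\mathcal{L}^n)^{-1}\big[(P_n g-i\eta P_n f)-(g-i\eta f)+(\mathcal{L}-\mathcal{L}^n)\psi\big],
\]
applying the uniform bound on $(\mathcal{L}^n)^{-1}$, the interpolation estimate~\eqref{eq:trig_int} on the right-hand side data, and the consistency bound on the last term yields
\[
\|\psi_n-\psi\|_p\le C_{p,q}\,n^{-q}\|\psi\|_{p+q},
\]
after absorbing the norms of $(f,g)$ into those of $\psi$ via $\psi=\mathcal{L}^{-1}(g-i\eta f)$.

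The only genuinely new work relative to Theorem~\ref{conv} is bookkeeping at the level of the individual compositions, and this is where I expect the main obstacle to lie. The delicate terms are those built from the adjoint double-layer operators $K_j^\top$ and the hypersingular operators $N_j$: the combination $N_1S_2$ and the squares $(K_2^\top)^2$ must be discretized so that the positive-order factor acts on a genuinely smoother argument. This is precisely why $K_j^\top$ is represented through the special splitting~\eqref{eq:splitHT} (the type-$B$ operators, smoothing by three orders) rather than the generic splitting~\eqref{eq:split}, and why Calder\'on's identity $N_2S_2=-\tfrac14 I+(K_2^\top)^2$ was used in deriving~\eqref{eq:single} to eliminate the raw composition $N_2S_2$. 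I would verify that, with these representations, each composition appearing in~\eqref{eq:discrete_SCFIE} matches the hypotheses of Lemma~\ref{lemm_est_2} (or, where a factor $T_0$ is hidden inside an $N_j$, of Lemma~\ref{lemm_est_4}), so that no term degrades the $n^{-q-1}$ rate. Once every term is checked, the estimate~\eqref{eq:f_conv0} for $\mathcal{L}$ follows, and the convergence conclusion is immediate from the stability argument above.
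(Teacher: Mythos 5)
Your proposal follows the same route as the paper: the paper's entire proof of this theorem is the remark that Lemma~\ref{lemm_est_2} together with the stability-plus-consistency argument of Theorem~\ref{conv} applies verbatim to the operator defined by the left-hand side of~\eqref{eq:discrete_SCFIE}, and that is exactly what you carry out, including the correct identification of why the splitting~\eqref{eq:splitHT} and the Calder\'on identity $N_2S_2=-\tfrac14 I+(K_2^\top)^2$ make every composition fall under Lemma~\ref{lemm_est_2}.

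One loose end: you import the GCSIE consistency rate~\eqref{eq:f_conv0}, i.e.\ $n^{-q-\min\{p-1,1\}}$, and consequently your Neumann-series step only yields operator-norm convergence $\|\mathcal{L}-\mathcal{L}^n\|_{H^p\to H^p}\le C_p n^{1-p}\to 0$ for $p>1$, whereas the theorem is stated for all $p\ge 1$. The $\min\{p-1,1\}$ loss in~\eqref{eq:f_conv0} comes exclusively from the compositions $B_nT_0$ in the GCSIE system, which are controlled by Lemma~\ref{lemm_est_4}; in the SCFIE system no such composition occurs, because the hypersingular operators enter only through the difference $N_1-N_2$, in which the $T_0$ (cotangent) parts cancel, leaving $A_7^1+A_8^1-A_7^2-A_8^2$ composed with $A_1^2+A_2^2$. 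Hence every term in~\eqref{eq:discrete_SCFIE} is of type $A$, $B$, $E$, or a composition covered by Lemmas~\ref{lemm_est_1}--\ref{lemm_est_2}, and the consistency estimate is the sharper $\|\mathcal{L}\psi-\mathcal{L}^n\psi\|_p\le C_{p,q}n^{-q-1}\|\psi\|_{p+q}$ for all $p\ge 1$, $q\ge 0$. With $q=0$ this gives operator-norm consistency $O(n^{-1})$ at $p=1$ as well, so the stability argument covers the full claimed range $p\ge 1$ (this is the same reason the paper asserts stability of the CFIESK discretization in $H^p[0,2\pi]$ for $p\ge1$). You in fact state that ``no term degrades the $n^{-q-1}$ rate,'' which contradicts your earlier adoption of the $\min\{p-1,1\}$ bound; committing to the former resolves the discrepancy and completes the proof.
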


\begin{remark}
In the case when the boundary $\Gamma$ is analytic and $u^{inc}$ is analytic, the convergence of $\left(u_n, \lambda_n\right)$ to  $\left(u\circ{\bf x},\lambda\right)$  in $H^{p}[0,2\pi]$ in Theorem~\ref{convCFIESK} and the convergence of $\psi_n$ to $\psi$ in $H^{p}[0,2\pi]$ in Theorem~\ref{convSCFIE} are of order $\mathcal{O}(e^{-ns})$ for some positive constant $s$. The exponential orders of convergence in Theorem~\ref{convCFIESK} and Theorem~\ref{convSCFIE} are obtained by taking into account the fact that estimates~\eqref{eq:trig_int} can be improved to order $\mathcal{O}(e^{-ns}), s>0$ in the case when $g$ is analytic and $2\pi$ periodic. Thus, all of the error estimates presented above can be improved to order $\mathcal{O}(e^{-ns}), s>0$ given that all the kernels of the integral operators that enter equation~\eqref{eq:discrete_02PS} are themselves analytic.
\end{remark}

\subsection{Discretization of the PSGCSIE equations~\eqref{eq:matrix_PSexplicit}}

The discretization of the PSGCSIE equations~\eqref{eq:matrix_PSexplicit} follows the same lines as the discretization of the GCSIE equations~\eqref{eq:matrix_explicit} described in Section~\ref{dGCSIE}. The main differences between the discretization of the GCSIE equations and PSGSIE consist of (a) the operator compositions $(S_1+\nu^{-1}S_2)PS(N_{\kappa})$ and $(N_1+\nu N_2)PS(S_{\kappa})$ that enter the definitions in equation~\eqref{eq:entriesPSexplicit} and (b) the discretization of the principal symbol operators $PS(S_{\kappa})$ and $PS(N_{\kappa})$. With regards to (a), we aim to highlight certain Calder\'on' type identities for operators compositions $S_j\circ PS(N_{\kappa})$ and $N_j\circ PS(S_{\kappa})$ for $j=1,2$. First, we use a suitable decomposition of the kernels $M_j(t,\tau)$ of the operators $S_j,j=1,2$ in the form
\begin{eqnarray} 
M_j(t,\tau)&=&M_{j,1}(t,\tau)\ln\left(4\sin^2\frac{t-\tau}{2}\right)+M_{j,2}(t,\tau)\nonumber\\
&=&-\frac{1}{4\pi}\ln\left(4\sin^2\frac{t-\tau}{2}\right)+M_{j,1}^1(t,\tau)\sin^2\left(\frac{t-\tau}{2}\right)\ln\left(4\sin^2\frac{t-\tau}{2}\right)+M_{j,2}(t,\tau)\qquad\quad \label{eq:splitM}
\end{eqnarray}
where the kernels $M_{j,1}^1(t,\tau),j=1,2$ are analytic in both variables $t$ and $\tau$ in the case when $\Gamma$ is analytic. For a given $2\pi$ periodic function $\psi$ we define then the operators
\begin{eqnarray}
(A_9^j\psi)(t)&=&-\int_0^{2\pi}M_{j,1}^1(t,\tau)\sin^2\left(\frac{t-\tau}{2}\right)\ln\left(4\sin^2\frac{t-\tau}{2}\right)\psi(\tau)d\tau\nonumber\\
(A_0\psi)(t)&=&\frac{1}{4\pi}\int_0^{2\pi}\ln\left(4\sin^2\frac{t-\tau}{2}\right)\psi(\tau)d\tau.
\end{eqnarray}
It follows from their definition that $A_9^j:H^p[0,2\pi]\to H^{p+3}[0,2\pi]$ for all $p$. Given a $2\pi$ periodic density $b^1$ we can write the following operator composition in the form
\begin{eqnarray}
\frac{2\nu}{1+\nu}(S_1+\nu^{-1}S_2)PS(N_{\kappa}) b^1&=&2 A_0PS(N_{\kappa})b^1+\frac{2\nu}{1+\nu}(A_9^1+\nu^{-1}A_9^2)PS(N_{\kappa})b^1\nonumber\\
&+&\frac{2\nu}{1+\nu}(A_2^1+\nu^{-1}A_2^2)PS(N_{\kappa})b^1.\label{eq:approxAt}
\end{eqnarray}
Given the identity~\cite{Kress}
$$\frac{1}{4\pi}\int_0^{2\pi}\ln\left(4\sin^2\frac{t}{2}\right)e^{int}dt=\begin{cases}0& \text{if $n=0$,}\\ -\frac{1}{2|n|}& \text{otherwise,} \end{cases}$$
we can express the operator $2 A_0PS(N_{\kappa})$ in spectral form as
\begin{equation}
2 (A_0PS(N_{\kappa})\phi)(t)=\sum_{n\in\mathbb{Z},n\neq 0}\frac{\sigma_0(N_{\kappa})(n)}{|n|}\hat{\phi}(n)e^{int}\nonumber\\
\end{equation}
where $\phi\in H^p[0,2\pi]$ and $\phi(t)=\sum_{n\in\mathbb{Z}}\hat{\phi}(n)e^{int}$. It follows easily from the definition~\eqref{eq:defPS1b} (see also \eqref{eq:PS}) that
$$\frac{\sigma_0(N_{\kappa})(n)}{|n|}=-\frac{1}{2}+\mathcal{O}(|n|^{-2}),\ |n|\to\infty$$ 
and thus
\begin{equation}\label{eq:second_compPS}
2 (A_0PS(N_{\kappa})\phi)(t)=-\frac{\phi(t)}{2}+(\tilde{A}_0\phi)(t),
\end{equation}
where the operator $\tilde{A}_0$ has the explicit spectral definition
$$(\tilde{A}_0\phi)(t)=\sum_{n\in\mathbb{Z},n\neq 0}\left(\frac{\sigma_0(N_{\kappa})(n)}{|n|}+\frac{1}{2}\right)\hat{\phi}(n)e^{int}$$
and hence $\tilde{A}_0:H^p[0,2\pi]\to H^{p+2}[0,2\pi]$.
We get thus
\begin{eqnarray}\label{eq:first_compPS}
\frac{2\nu}{1+\nu}(S_1+\nu^{-1}S_2)PS(N_{\kappa}) b^1&=&-\frac{b^1}{2}+\tilde{A}_0b^1+\frac{2\nu}{1+\nu}(A_9^1+\nu^{-1}A_9^2)PS(N_{\kappa})b^1\nonumber\\
&+&\frac{2\nu}{1+\nu}(A_2^1+\nu^{-1}A_2^2)PS(N_{\kappa})b^1.
\end{eqnarray}
Next, we use the representation of the operators $N_j,j=1,2$ in parametric form to write
\begin{eqnarray*}
\frac{2}{1+\nu}(N_1+\nu N_2)PS(S_{\kappa})a^1&=&2T_0[PS(S_{\kappa})a^1]+\frac{2}{1+\nu}(A_7^1+\nu A_7^2)PS(S_{\kappa})a^1\nonumber\\
&+&\frac{2}{1+\nu}(A_8^1+\nu A_8^2)PS(S_{\kappa})a^1.
\end{eqnarray*}
Given that 
\[
(T_0e^{im\cdot})(t)=-\frac{|m|}{2}e^{imt},\ m\in\mathbb{Z},
\]
 we can write the composition $2T_0\ PS(S_{\kappa})$ in spectral form as
\begin{equation}
2 (T_0PS(S_{\kappa})\phi)(t)=-\sum_{n\in\mathbb{Z}} |n|\ \sigma_0(S_{\kappa})(n)\hat{\phi}(n)e^{int}.\nonumber\\
\end{equation}
From \eqref{eq:defPS1b} it follows  that
$$|n|\ \sigma_0(S_{\kappa})(n)=+\frac{1}{2}+\mathcal{O}(|n|^{-2}),\ |n|\to\infty$$ 
and thus
\begin{equation}\label{eq:T0Ps}
2 (T_0PS(S_{\kappa})\phi)(t)=-\frac{\phi(t)}{2}+(\tilde{A}_{00}\phi)(t),
\end{equation}
where the operator $\tilde{A}_{00}$ has the explicit spectral definition
\[
(\tilde{A}_{00}\phi)(t)= \sum_{n\in\mathbb{Z}}\left(-|n|\ \sigma_0(S_{\kappa+i\varepsilon})(n)+\frac{1}{2}\right)\hat{\phi}(n)e^{int}
\]
and thus $\tilde{A}_{00}:H^p[0,2\pi]\to H^{p+2}[0,2\pi]$. Hence, we get
\begin{eqnarray}\label{eq:sec_compPS}
\frac{2}{1+\nu}(N_1+\nu N_2)PS(S_{\kappa})b^1&=&-\frac{b^1}{2}+\tilde{A}_{00}b^1+\frac{2}{1+\nu}(A_7^1+\nu A_7^2)PS(S_{\kappa})b^1\nonumber\\
&+&\frac{2}{1+\nu}(A_8^1+\nu A_8^2)PS(S_{\kappa})b^1.
\end{eqnarray}

Using equations \eqref{eq:approxAt}, \eqref{eq:second_compPS},    \eqref{eq:first_compPS},
\eqref{eq:T0Ps}, and~\eqref{eq:sec_compPS}, we apply the quadrature rules described in Section~\ref{dGCSIE} to derive the following discretization of the PSCGSIE equations
\begin{eqnarray}\label{eq:discrete_02PS}
a_n^1&-&\frac{\nu}{1+\nu}P_n(A_{3,n}^1+A_{4,n}^1)a_n^1-\frac{1}{1+\nu}P_n(A_{3,n}^2+A_{4,n}^2)a_n^1\nonumber\\
&-&\tilde{A}_0a_n^1-\frac{2\nu}{1+\nu}P_n(A_9^1+\nu^{-1}A_9^2)PS(N_{\kappa})a_n^1-\frac{2\nu}{1+\nu}P_n(A_2^1+\nu^{-1}A_2^2)PS(N_{\kappa})a_n^1\nonumber\\
&+&\frac{1}{1+\nu}P_n(A_{1,n}^2+A_{2,n}^2)b_n^1+\frac{1}{1+\nu}P_n(A_{1,n}^1+A_{2,n}^1)b_n^1\nonumber\\
&-&\frac{2}{1+\nu}P_n[(A_{3,n}^1+A_{3,n}^2+A_{4,n}^1+A_{4,n}^2)PS(S_{\kappa})b_n^1=P_nf\nonumber\\
b_n^1&+&\frac{\nu}{1+\nu}P_n(A_{5,n}^2+A_{6,n}^2)b_n-\frac{1}{1+\nu}P_n(A_{5,n}^1+A_{6,n}^1)b_n^1\nonumber\\
&-&\tilde{A}_{00}b_n^1-\frac{2}{1+\nu}P_n(A_7^1+\nu A_7^2)PS(S_{\kappa})b^1-\frac{2}{1+\nu}P_n(A_8^1+\nu A_8^2)PS(S_{\kappa})b^1\nonumber\\
&-&\frac{\nu}{1+\nu}P_n(A_{7,n}^2+A_{8,n}^2-A_{7,n}^1-A_{8,n}^1)a_n^1\nonumber\\
&-&\frac{2\nu}{1+\nu}P_n(A_{5,n}^1+A_{5,n}^2+A_{6,n}^1+A_{6,n}^2)PS(N_{\kappa})a_n^1\nonumber=P_ng.
\end{eqnarray}
Observe that necessarily $a_n^1,b_n^1$ are trigonometric polynomials, 
and therefore $PS(N_{\kappa})$ and $PS(S_{\kappa})$
can be easily computed.

Notice that now it is easy to prove
$$\|\tilde{A}_{0,n}\psi-\tilde{A}_{0}\psi\|_p\leq C n^{-q-2}\|\psi\|_{p+q},\qquad \|\tilde{A}_{00,n}\psi-\tilde{A}_{00}\psi\|_p\leq C n^{-q-2}\|\psi\|_{p+q}.$$
Given that the operators $A_9^j,j=1,2$ are of the type covered in Lemma~\ref{lemma:newLemma}, we obtain the following result along the same lines as Theorem~\ref{conv} 
\begin{theorem}\label{conv1} 
For all $n$ sufficiently large the approximating equation~\eqref{eq:discrete_02PS} has a unique solution $(a_n^1,b_n^1)$. Moreover, for $p\ge 1$ and $q\ge 0$, we have the 
following error estimate
\[
\max\{\|a_n^1-a^1\|_p,\|b_n^1-b^1\|_p\}\leq 
C_3n^{-q}\max\{\|a^1\|_{p+q},\|b^1\|_{p+q}\}
\]
for some constant $C_3=C_3(p,q)$, where $(a^1,b^1)$ is the solution of the 
parametrized PSGCSIE equation~\eqref{eq:matrix_PSexplicit} 
defined in \eqref{eq:rhs:b}. In the case when the boundary $\Gamma$ is analytic and $u^{inc}$ is analytic, the convergence of $(a_n^1,b_n^1)$ to $(a^1,b^1)$ in $H^{p}[0,2\pi]$ is of order $\mathcal{O}(e^{-ns})$ for some positive constant $s$.
\end{theorem}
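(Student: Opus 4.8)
The plan is to reproduce the perturbation argument of Theorem~\ref{conv}, the only new work being the consistency analysis of the operator compositions that involve the spectral operators $PS(N_{\kappa})$ and $PS(S_{\kappa})$. Denote by $\mathcal{D}_{PS}$ the $2\times 2$ matrix operator with entries $PS\tilde{D}_{ij}$ from~\eqref{eq:entriesPSexplicit}, and by $\mathcal{D}_{PS}^n$ the discrete operator given by the left-hand side of~\eqref{eq:discrete_02PS}. The goal of the first and main stage is a block-wise consistency estimate of the same form as~\eqref{eq:f_conv0}, namely
\[
\|PS\tilde{D}_{ij}\psi-PS\tilde{D}_{ij}^n\psi\|_p\le C_{p,q}\,n^{-q-\min\{p-1,1\}}\|\psi\|_{p+q},\qquad 1\le p,\ 0\le q,
\]
for all $i,j=1,2$ and all $\psi\in H^{p+q}[0,2\pi]$.

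To obtain this block estimate I would sort the terms of~\eqref{eq:discrete_02PS} into three groups. The contributions built solely from $S_j,K_j,K_j^\top,N_j$ are treated exactly as in Theorem~\ref{conv}, via Lemma~\ref{lemm_est_1} and Lemma~\ref{lemm_est_2}. The operators $\tilde{A}_0$ and $\tilde{A}_{00}$ produced by the Calder\'on identities~\eqref{eq:second_compPS} and~\eqref{eq:T0Ps} are smoothing of two orders and are evaluated exactly on the trigonometric polynomials $a_n^1,b_n^1$; consequently they contribute no consistency error (and, were one to discretize them, the two-order rate $n^{-q-2}$ recorded just before the statement would in any case be dominated by the other terms). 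The genuinely new terms are the compositions $A_9^jPS(N_{\kappa})$ and $A_2^jPS(N_{\kappa})$ in the first block, together with $(A_3^j+A_4^j)PS(S_{\kappa})$, $(A_7^j+A_8^j)PS(S_{\kappa})$ and $(A_5^j+A_6^j)PS(N_{\kappa})$ in the remaining blocks. Since $PS(N_{\kappa})$ and $PS(S_{\kappa})$ are computed without error on trigonometric polynomials, in each such composition the only discretized factor is the outer layer operator; the error therefore reduces to applying Lemma~\ref{lemma:newLemma} (with $\alpha=3$ for the type-$B$ operator $A_9^j$, which maps $H^p[0,2\pi]\to H^{p+3}[0,2\pi]$) or Lemma~\ref{lemm_est_1} to the outer operator evaluated at the exactly computed intermediate density $PS(N_{\kappa})\psi$ or $PS(S_{\kappa})\psi$.

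The main obstacle is the Sobolev bookkeeping through these compositions, since $PS(N_{\kappa})\colon H^s[0,2\pi]\to H^{s-1}[0,2\pi]$ lowers regularity by one order while $PS(S_{\kappa})\colon H^s[0,2\pi]\to H^{s+1}[0,2\pi]$ raises it by one. In the $PS(N_{\kappa})$ compositions the one-order loss must be absorbed by the smoothing of the outer operators $A_9^j$, $A_2^j$, $A_5^j+A_6^j$; in the terms that feed a $PS(S_{\kappa})$ density into the hypersingular part $N_j$ the decisive point is that the index-zero contributions cancel, which is precisely the content of the Calder\'on identities~\eqref{eq:first_compPS} and~\eqref{eq:sec_compPS}: they isolate the explicit $-\tfrac12 I$ terms and leave the two-order smoothing remainders $\tilde{A}_0,\tilde{A}_{00}$. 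Verifying that these cancellations genuinely remove the positive-order part --- so that the surviving remainders discretize at the advertised rate rather than losing an order --- is where care is required. Summing the block estimates then yields
\[
\|\mathcal{D}_{PS}-\mathcal{D}_{PS}^n\|_{H^p[0,2\pi]\times H^p[0,2\pi]\to H^p[0,2\pi]\times H^p[0,2\pi]}\le C_p\,n^{1-p}.
\]

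With this bound established the argument closes exactly as in Theorem~\ref{conv}. Because $\mathcal{D}_{PS}$ is boundedly invertible on $H^p[0,2\pi]\times H^p[0,2\pi]$ by the well-posedness of the PSGCSIE formulation~\cite{turc2}, a Neumann-series perturbation (Theorem~10.1 in~\cite{Kress}) shows that $\mathcal{D}_{PS}^n$ is invertible with uniformly bounded inverse for all $n\ge n_0$, which gives existence and uniqueness of $(a_n^1,b_n^1)$. The error estimate follows from the identity
\[
\left(\begin{array}{c}a_n^1\\b_n^1\end{array}\right)-\left(\begin{array}{c}a^1\\b^1\end{array}\right)=(\mathcal{D}_{PS}^n)^{-1}\left[\left(\begin{array}{c}P_nf-f\\P_ng-g\end{array}\right)+(\mathcal{D}_{PS}-\mathcal{D}_{PS}^n)\left(\begin{array}{c}a^1\\b^1\end{array}\right)\right]
\]
together with~\eqref{eq:trig_int} and the block consistency bound, just as before. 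Finally, for the exponential claim I would stress the structural difference from the GCSIE case: the PSGCSIE scheme replaces $S_{\kappa}$ and $N_{\kappa}$ by their spectral principal symbols and so never invokes the cutoff $\chi$ responsible for the merely smooth GCSIE kernels. Hence for analytic $\Gamma$ every kernel entering~\eqref{eq:discrete_02PS} --- in particular the $M_{j,1}^1$ of~\eqref{eq:splitM} --- is analytic, and when $u^{inc}$ is analytic the estimate~\eqref{eq:trig_int} improves to $\mathcal{O}(e^{-ns})$, upgrading the whole bound to exponential order.
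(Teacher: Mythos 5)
Your proposal is correct and follows essentially the same route as the paper: the paper likewise reduces everything to the Calder\'on identities~\eqref{eq:first_compPS} and~\eqref{eq:sec_compPS} (which isolate $-\tfrac12 I$ plus the two-order-smoothing remainders $\tilde{A}_0,\tilde{A}_{00}$), notes that $PS(N_{\kappa})$ and $PS(S_{\kappa})$ are evaluated exactly on trigonometric polynomials so that only the outer layer operators (in particular the type-$B$ operators $A_9^j$ covered by Lemma~\ref{lemma:newLemma}) carry discretization error, and then repeats the Neumann-series perturbation argument of Theorem~\ref{conv}. Your explanation of the exponential rate --- no cutoff $\chi$ is needed, so all kernels in~\eqref{eq:discrete_02PS} are analytic for analytic $\Gamma$ --- is exactly the paper's justification as well.
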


\section{Numerical results: Nystr\"om discretizations
\label{prec}}

We present in this section a variety of numerical results that
demonstrate the properties of the classical formulations CFIESK~\eqref{eq:system_trans}, SCFIE~\eqref{eq:single}, and the regularized combined field integral equations GCSIE~\eqref{eq:matrix_explicit} and PSGCSIE~\eqref{eq:matrix_PSexplicit} constructed in the previous sections. Solutions of the linear systems arising from the Nystr\"om discretizations of the transmission integral equations described in Section~\ref{singular_int} are obtained by means of the fully complex version of the iterative solver GMRES~\cite{SaadSchultz}. For the case of the regularized GCSIE and PSGCSIE formulations we present choices of the complex wavenumber $\kappa$ in each of the cases considered; our extensive numerical experiments suggest that these values of $\kappa$ leads to nearly optimal numbers of GMRES iterations to reach desired (small) GMRES relative residuals. We also present in each table the values of the GMRES relative residual tolerances used in the numerical experiments.
 
We present scattering experiments concerning the following two smooth geometries: (a) a kite-shaped scatterer whose parametrization is given by ${\bf x}(t)=(\cos{t}+0.65\cos{2t}-0.65,1.5\sin{t})$~\cite{KressH}, and (b) a five petal scatterer whose parametrization is given in polar coordinates by $x_1(t)=r(t)\cos{t}$, $x_2(t)=r(t)\sin{t}$ with $r(t)=1+0.3\cos{5t}$.  We note that each of these geometries has a diameter equals to $2$. For every scattering experiment we consider plane-wave incidence $u^{\rm inc}$ and we present maximum far-field errors, that is we choose sufficiently many directions $\frac{\mathbf{x}}{|\mathbf{x}|}$ and for each direction we compute the far-field amplitude $u^{1}_\infty(\hat{\mathbf{x}})$ defined as
\begin{equation}
\label{eq:far_field}
u^{1}(\mathbf{x})=\frac{e^{ik_1|\mathbf{x}|}}{\sqrt{|\mathbf{x}|}}\left(u^{1}_\infty(\hat{\mathbf{x}})+\mathcal{O}\left(\frac{1}{|\mathbf{x}|}\right)\right),\
|\mathbf{x}|\rightarrow\infty.\\
\end{equation}
The maximum far-field errors were evaluated through comparisons of the
numerical solutions $u_\infty^{1, \rm calc}$ corresponding to either formulation with reference solutions $u_\infty^{1,\rm ref}$ by means of the relation
\begin{equation}
\label{eq:farField_error}
\varepsilon_\infty={\rm max}|u_\infty^{1,\rm
calc}(\hat{\mathbf{x}})-u_\infty^{1,\rm ref}(\hat{\mathbf{x}})|
\end{equation}
The latter solutions $u_\infty^{1,\rm ref}$ were produced using solutions corresponding with refined discretizations based on the formulation SCFIE with GMRES residuals of $10^{-12}$ for all other geometries. Besides far field errors, we display the numbers of iterations required by the GMRES solver to reach relative residuals that are specified in each case. We note that in the cases of high-contrast transmission problems with $k_1>k_2$, we observed that the CFIESK formulation requires two orders of magnitude smaller GMRES tolerance residuals in order to achieve for the same discretizations the same level of accuracy as the other formulations considered. We used in the numerical experiments discretizations ranging from 4 to 10 discretization points per wavelength, for frequencies $\omega$ in the medium to the high-frequency range corresponding to scattering problems of sizes ranging from $2.5$ to $81.6$ wavelengths. The columns ``Unknowns'' in all Tables display the numbers of unknowns used in each case, which equal to the value $4n$ defined in Section~\ref{singular_int} for the CFIESK, GCSIE, and PSGCSIE formulations, and $2n$ for the SCFIE formulation. In all of the scattering experiments we considered plane-wave incident fields of direction $d=(0,-1)$. 

As it can be seen in Table~\ref{results0}, our transmission solvers converge with high-order, as predicted by the error analysis in Section~\ref{singular_int}. In Table~\ref{results1} we present computational times required by a matrix-vector product for each of the four formulations CFIESK, SCFIE, GCSIE, and PSGCSIE. The computational times presented were delivered by a MATLAB implementation of the Nystr\"om
discretization on a MacBookPro machine with $2\times 2.3$ GHz
Quad-core Intel i7 with 16 GB of memory. We present computational
times for the kite geometry, as the computational times required by
the five petal geometry considered in this text are extremely similar to
those for the kite geometry at the same levels of discretization. As
it can be seen from the results in Table~\ref{results1}, the
computational times required by a matrix-vector product for the
CFIESK, SCFIE, and PSGCSIE formulations are quite similar, while the computational times required by a matrix-vector product related to the GCSIE
formulation are on average $1.3$ times more expensive than
those required by the other three formulations. 

In Table~\ref{results22} we present scattering experiments in the case of high-contrast materials so that $k_1<k_2$. As it can be seen, solvers based on the {\em single} formulation SCFIE and the regularized formulations GCSIE and PSGCSIE require fewer GMRES iterations than those based on the formulation CFIESK. In terms of total computational times, the solvers based on the PSGCSIE formulations outperform solvers based on the other three formulations in the high frequency regime.   
\begin{table}
\begin{center}\resizebox{!}{1.4cm}
{
\begin{tabular}{|c|c|c|c|c|c|c|c|c|c|}
\hline
Scatterer & Unknowns & \multicolumn{2}{c|}{${\rm CFIESK}$} &\multicolumn{2}{c|}{${\rm SCFIE}$}&\multicolumn{2}{c|}{${\rm GCSIE}$}& \multicolumn{2}{c|}{${\rm PSGCSIE}$}\\
\cline{3-10}
 & & Iter.& $\epsilon_\infty$ &Iter.&$\epsilon_\infty$&Iter.&$\epsilon_\infty$&Iter.&$\epsilon_\infty$\\
\hline
Kite & 256 & 67 & 3.3 $\times$ $10^{-4}$& 39 & 7.2 $\times$ $10^{-4}$ & 53 & 4.0 $\times$ $10^{-4}$ & 52 & 3.9 $\times$ $10^{-4}$\\
Kite & 512 & 67 & 8.0 $\times$ $10^{-8}$ & 39 & 1.5 $\times$ $10^{-8}$ & 53 & 3.2 $\times$ $10^{-8}$ & 52 & 5.1 $\times$ $10^{-8}$\\
\hline
\hline
Five petal & 256 & 52 & 1.4 $\times$ $10^{-5}$& 32 & 4.1 $\times$ $10^{-5}$ & 45 & 1.2 $\times$ $10^{-5}$ & 42 & 1.2 $\times$ $10^{-5}$\\
Five petal & 512 & 49 & 1.4 $\times$ $10^{-8}$ & 32 & 1.1 $\times$ $10^{-8}$ & 44 & 3.3 $\times$ $10^{-8}$ & 42 & 2.1 $\times$ $10^{-8}$\\
\hline
\end{tabular}
}\caption{\label{results0} High-order accuracy of our solvers for two geometries: kite and five petal geometry. In all the experiments we considered $\nu=\epsilon_1/\epsilon_2$, $\omega=8$, $\epsilon_1=1$, and $\epsilon_2=4$, GMRES residual $10^{-8}$. We note that the number of unknowns used for the SCFIE formulation is half the number of unknowns displayed.}
\end{center}
\end{table}

\begin{table}
\begin{center}
\begin{tabular}{|c|c|c|c|c|c|}
\hline
Geometry & Unknowns & ${\rm CFIESK}$&${\rm SCFIE}$ & ${\rm GCSIE}$ &${\rm PSGCSIE}$\\
\hline
Kite & 512 & 12.99 sec & 12.27 sec& 16.54 sec & 13.97 sec\\ 
Kite & 1024 & 51.55 sec & 50.22 sec & 66.39 sec & 52.39 sec\\ 
\hline
\end{tabular}
\caption{\label{results1} Computational times required by a matrix-vector product for each of the four integral equation formulations of the transmission problems considered in this text.}
\end{center}
\end{table}

\begin{table}
\begin{center}
\resizebox{!}{1.5cm}
{
\begin{tabular}{|c|c|c|c|c|c|c|c|c|c|c|c|}
\hline
$\omega$ & $\epsilon_1$ & $\epsilon_2$ & Unknowns & \multicolumn{2}{c|}{CFIESK} &\multicolumn{2}{c|}{SCFIE}&\multicolumn{2}{c|}{GCSIE}& \multicolumn{2}{c|}{PSGCSIE}\\
\cline{5-12}
 & & & & Iter.& $\epsilon_\infty$ &Iter.&$\epsilon_\infty$&Iter.&$\epsilon_\infty$&Iter.&$\epsilon_\infty$\\
\hline
8 & 1 & 16 & 512 & 79 & 3.5 $\times$ $10^{-4}$& 88 & 7.6 $\times$ $10^{-4}$ & 65 & 5.2 $\times$ $10^{-4}$& 66 & 5.3 $\times$ $10^{-4}$\\
16 & 1 & 16 & 1024 & 122 & 2.3 $\times$ $10^{-3}$ & 121 & 1.1 $\times$ $10^{-3}$ & 93 & 1.6 $\times$ $10^{-3}$ & 91 & 1.8 $\times$ $10^{-3}$ \\
32 & 1 & 16 & 2048 & 176 & 5.6 $\times$ $10^{-4}$ & 152 & 1.5 $\times$ $10^{-3}$ & 112 &  2.2 $\times$ $10^{-3}$ & 109 & 1.9 $\times$ $10^{-3}$\\
64 & 1 & 16 & 4096 & 263 & 7.6 $\times$ $10^{-4}$ & 206 & 1.9 $\times$ $10^{-3}$ & 147 & 1.9 $\times$ $10^{-3}$ & 147 & 2.6 $\times$ $10^{-3}$\\
128 & 1 & 16 & 8192 & 338 & 7.7 $\times$ $10^{-4}$ & 264 & 1.6 $\times$ $10^{-3}$ & 187 & 2.1 $\times$ $10^{-3}$ & 187 & 2.2 $\times$ $10^{-3}$\\
\hline
\end{tabular}
}

\ \\[2ex]

\resizebox{!}{1.5cm}
{
\begin{tabular}{|c|c|c|c|c|c|c|c|c|c|c|c|}
\hline
$\omega$ & $\epsilon_1$ & $\epsilon_2$ & Unknowns & \multicolumn{2}{c|}{CFIESK} &\multicolumn{2}{c|}{SCFIE}&\multicolumn{2}{c|}{GCSIE}& \multicolumn{2}{c|}{PSGCSIE}\\
\cline{5-12}
 & & & & Iter.& $\epsilon_\infty$ &Iter.&$\epsilon_\infty$&Iter.&$\epsilon_\infty$&Iter.&$\epsilon_\infty$\\
\hline
8 & 1 & 16 & 512 & 66 & 2.2 $\times$ $10^{-4}$& 81 & 4.2 $\times$ $10^{-4}$ & 61 & 4.4 $\times$ $10^{-4}$& 63 & 5.8 $\times$ $10^{-4}$\\
16 & 1 & 16 & 1024 & 118 & 1.3 $\times$ $10^{-4}$ & 124 & 3.0 $\times$ $10^{-4}$ & 91 & 2.1 $\times$ $10^{-4}$ & 92 & 2.5 $\times$ $10^{-3}$ \\
32 & 1 & 16 & 2048 & 162 & 7.2 $\times$ $10^{-5}$ & 169 & 3.8 $\times$ $10^{-4}$ & 124 &  3.2 $\times$ $10^{-4}$ & 124 & 3.3 $\times$ $10^{-4}$\\
64 & 1 & 16 & 4096 & 264 & 1.0 $\times$ $10^{-4}$ & 250 & 2.9 $\times$ $10^{-4}$ & 185 & 3.6 $\times$ $10^{-4}$ & 192 & 3.8 $\times$ $10^{-4}$\\
128 & 1 & 16 & 8192 & 348 & 2.0 $\times$ $10^{-4}$ & 350 & 4.1 $\times$ $10^{-4}$ & 241 & 3.3 $\times$ $10^{-4}$ & 247 & 3.4 $\times$ $10^{-4}$\\
\hline
\end{tabular}
}

\caption{\label{results22} Scattering experiments for the kite (top) and five petal (bottom)
  geometry with $\nu=\epsilon_1/\epsilon_2$, and for the CFIESK, SCFIE, GCSIE and PSGCSIE formulations. In the SCFIE formulation we selected $\eta=k_1$. In the regularized formulations GCSIE and PSGCSIE we used $\kappa=(k_1+k_2)/2+i\ \omega $. We note that the number of unknowns used for the SCFIE formulation is half the number of unknowns displayed.}
\end{center}
\end{table}

In Tables \ref{results24} we present scattering experiments in the case of high-contrast materials so that $k_1>k_2$. As it can be seen, solvers based on the {\em single} formulation SCFIE and the regularized formulations GCSIE and PSGCSIE require one order of magnitude fewer GMRES iterations than those based on the formulation CFIESK in order to reach the same level of accuracy. We note that in terms of total computational times, solvers based on the SCFIE, GCSIE, and PSGCSIE also deliver one order of magnitude savings over those based on the CFIESK formulations.

\begin{table}
\begin{center}
\resizebox{!}{1.4cm}
{
\begin{tabular}{|c|c|c|c|c|c|c|c|c|c|c|c|}
\hline
$\omega$ & $\epsilon_1$ & $\epsilon_2$ & Unknowns & \multicolumn{2}{c|}{CFIESK} &\multicolumn{2}{c|}{SCFIE}&\multicolumn{2}{c|}{GCSIE}& \multicolumn{2}{c|}{PSGCSIE}\\
\cline{5-12}
 & & & & Iter.& $\epsilon_\infty$ &Iter.&$\epsilon_\infty$&Iter.&$\epsilon_\infty$&Iter.&$\epsilon_\infty$\\
\hline
8 & 16 & 1 & 512 & 166* & 3.1 $\times$ $10^{-4}$& 34 & 1.5 $\times$ $10^{-4}$ & 30 & 3.8 $\times$ $10^{-4}$& 31 & 4.0 $\times$ $10^{-4}$\\
16 & 16 & 1 & 1024 & 287* & 4.5 $\times$ $10^{-4}$ & 41 & 1.7 $\times$ $10^{-4}$ & 36 & 3.5 $\times$ $10^{-4}$ & 38 & 4.1 $\times$ $10^{-4}$ \\
32 & 16 & 1 & 2048 & 401* & 4.7 $\times$ $10^{-4}$ & 49 & 1.8 $\times$ $10^{-4}$ & 44 & 3.5 $\times$ $10^{-4}$ & 46 & 3.6 $\times$ $10^{-4}$\\
64 & 16 & 1 & 4096 & 668* & 5.3 $\times$ $10^{-4}$ & 58 & 2.1 $\times$ $10^{-4}$ & 52 & 3.8 $\times$ $10^{-4}$ & 54 & 3.6 $\times$ $10^{-4}$\\
128 & 16 & 1 & 8192 & 798* & 1.9 $\times$ $10^{-4}$ & 70 & 2.2 $\times$ $10^{-4}$ & 64 & 3.6 $\times$ $10^{-4}$ & 66 & 3.5 $\times$ $10^{-4}$\\
\hline
\end{tabular}
}

\ \\

\resizebox{!}{1.4cm}
{
\begin{tabular}{|c|c|c|c|c|c|c|c|c|c|c|c|}
\hline
$\omega$ & $\epsilon_1$ & $\epsilon_2$ & Unknowns & \multicolumn{2}{c|}{CFIESK} &\multicolumn{2}{c|}{SCFIE}&\multicolumn{2}{c|}{GCSIE}& \multicolumn{2}{c|}{PSGCSIE}\\
\cline{5-12}
 & & & & Iter.& $\epsilon_\infty$ &Iter.&$\epsilon_\infty$&Iter.&$\epsilon_\infty$&Iter.&$\epsilon_\infty$\\
\hline
8 & 16 & 1 & 512 & 143* & 1.0 $\times$ $10^{-4}$& 25 & 1.2 $\times$ $10^{-4}$ & 25 & 2.3 $\times$ $10^{-4}$& 25 & 2.9 $\times$ $10^{-4}$\\
16 & 16 & 1 & 1024 & 238* & 4.9 $\times$ $10^{-4}$ & 37 & 1.6 $\times$ $10^{-4}$ & 34 & 2.8 $\times$ $10^{-4}$ & 34 & 3.3 $\times$ $10^{-4}$ \\
32 & 16 & 1 & 2048 & 388* & 5.1 $\times$ $10^{-4}$ & 45 & 2.1 $\times$ $10^{-4}$ & 40 & 2.9 $\times$ $10^{-4}$ & 41 & 2.1 $\times$ $10^{-4}$\\
64 & 16 & 1 & 4096 & 630* & 2.7 $\times$ $10^{-4}$ & 54 & 2.0 $\times$ $10^{-4}$ & 48 & 3.0 $\times$ $10^{-4}$ & 50 & 2.6 $\times$ $10^{-4}$\\
128 & 16 & 1 & 8192 & 920* & 3.6 $\times$ $10^{-4}$ & 64 & 2.0 $\times$ $10^{-4}$ & 57 & 3.4 $\times$ $10^{-4}$ & 58 & 3.3 $\times$ $10^{-4}$\\
\hline
\end{tabular}
}
\caption{\label{results24} Scattering experiments for the five petal (top)
and kite (bottom)
  geometry with $\nu=1$, and for the CFIESK, SCFIE, GCSIE, and PSGCSIE formulations. In the SCFIE formulation we selected $\eta=k_1$. In the regularized formulations GCSIE and PSGCSIE we used $\kappa=k_1+i\omega$. The asterisk sign in the CFIESK formulation signifies that the GMRES tolerance residual was set to equal $10^{-6}$ in that case. For all the other formulations we set a GMRES tolerance residual equal $10^{-4}$. We used a lower tolerance residual in the case of the CFIESK formulations in order to achieve the same level of accuracy as the other formulations. One order of magnitude less accurate results were produced when we used a GMRES tolerance residual equal to $10^{-4}$ in that case of the CFIESK formulations. We note that the number of unknowns used for the SCFIE formulation is half the number of unknowns displayed.}
\end{center}
\end{table}

\section*{Acknowledgments}
 Yassine Boubendir gratefully acknowledge support from NSF through contract 
DMS-1319720. Catalin Turc gratefully acknowledge support from NSF through contract DMS-1312169.

\bibliography{biblio2}

\end{document}